\newcommand{\E}{\mathbb{E}}
\newcommand{\R}{\mathbb{R}}
\newcommand{\Prob}{\mathbb{P}}
\newcommand{\Z}{\mathbb{Z}}
\DeclareMathOperator*{\argmin}{arg\,min}
\newcommand{\pend}{\hfill \thicklines \framebox(6.6,6.6)[l]{}}
\newenvironment{proof*}[1]{\noindent {\sc  #1} \rm}{\pend}
\newtheorem{theorem}{Theorem}[section]
\newtheorem{lemma}{Lemma}[section]
\newtheorem{assumption}{Assumption}[section]
\newtheorem{proposition}{Proposition}[section]
\newtheorem{remark}{Remark}[section]
\newtheorem{corollary}{Corollary}[section]
\newtheorem{definition}{Definition}[section]
\newcommand{\setsection}[2] {
	\setcounter{section}{#1}
	\setcounter{subsection}{0}
	\setcounter{equation}{0}
	\setcounter{conjecture}{0}
	\setcounter{assumption}{0}
	\setcounter{question}{0}
	\setcounter{definition}{0}
	\setcounter{theorem}{0}
	\setcounter{corollary}{0}
	\setcounter{lemma}{0}
	\setcounter{proposition}{0}
	\setcounter{remark}{0}
	\setcounter{appen}{0}
	\setsection*{\large \bf \thesection. #2}}
\tikzset{
	server/.style={circle, inner sep=0.0mm, minimum width=.4cm,
		draw=green,fill=green!10,thick},
	ellipseserver/.style={ellipse, inner sep=0.0mm, minimum width=.8cm,
		draw=green,fill=green!10,thick},
	buffer/.style={rectangle, rounded corners=3pt,
		inner sep=0.0mm, minimum width=.7cm, minimum height=1cm,
		draw=orange,fill=blue!10,thick},
	vbuffer/.style={rectangle,rounded corners=3pt,
		inner sep=0.0mm,  minimum width=.7cm, minimum height=1cm,
		draw=orange,fill=purple!10,thick},
	wbuffer/.style={rectangle,rounded corners=3pt,
		inner sep=0.0mm,  minimum width=.7cm, minimum height=1cm,
		draw=orange,fill=orange!10,thick},
	routing/.style={circle,inner sep=0pt,minimum size=.45cm,
		draw=red,minimum width=.1cm, fill=red!30},
	state/.style={inner sep=1.0mm, rounded corners=2pt,
		draw=green,fill=green!10,thick},
	dot/.style={circle, inner sep=0.0mm, minimum width=.6cm,
		draw=blue,fill=blue!10,thick},
	task/.style={circle, inner sep=0.0mm, minimum width=.6cm,
		draw=blue,fill=blue!10,thick},
	data/.style={rectangle, inner sep=0.0mm, minimum width=.5cm,
		minimum height=.4cm,
		draw=red,fill=red!10,thick},
	center/.style={circle,inner sep=0.0mm, minimum width=0cm,
		draw=white,fill=white,thick },
}
\begin{document}
	\title{\bf \Large Explicit Steady-State Approximations for Parallel Server Systems with Heterogeneous Servers}
	
	\author{Yaosheng Xu \\ University of Chicago }
	\date{\today}

	\maketitle 
	\begin{abstract}
		We study the steady-state performance of parallel-server systems under an immediate routing architecture with two sources of heterogeneity: servers and job classes, subject to compatibility constraints. We focus on the weighted\--workload\--task\--allocation (WWTA) policy, a load-balancing scheme known to be throughput-optimal for such systems. Under a relaxed complete-resource-pooling (CRP) condition, we prove a ``strong form" of state-space collapse in heavy traffic and that the scaled workload of each server converges in distribution to an exponential random variable, whose parameter is explicitly
		given by system primitives. Our analysis yields three main insights. First, the conventional heavy-traffic requirement of a unique static allocation plan can be dropped; a relaxed CRP condition suffices. Second, the limiting workload distribution is shown to be independent of local scheduling policy on server side, allowing substantial flexibility. Third, the inefficient (non-basic) activities prescribed by static allocation plan is proved to receive an asymptotically negligible fraction of routing and service, even though WWTA has no prior knowledge of which activities are basic, highlighting its robustness to changing arrival rates.
		
	\end{abstract}
	\section{Introduction}
	Distributed computing frameworks such as Hadoop and Spark are essential for processing large-scale datasets in High-Performance Computing (HPC) and artificial intelligence (AI).  In these computing platforms, service-rate heterogeneity \citep{LeeChunKatz2011,ChenChenWangWang2025}, data locality \citep{WangYing2016,ZhaoTangChenYinDeng2025} and compatibility constraints \citep{WengZhouSrik2020,ZhaoMukWu2024} are critical to system performance. Parallel server systems offer a natural queueing framework to model and analyze these data center structures and their task-scheduling dynamics. This analogy views each worker node as a server and each computational task as a job, with the cluster manager assigning tasks to the eligible servers according to a routing or scheduling policy, depending on data locality and compatibility constraints.
	
	Parallel server systems have been developed and studied for data center operations in extensive literature; see, for example, \cite{WengZhouSrik2020}, \cite{ZhaoMukWu2024} and \cite{MenKua2025}. A computing system modeled by a parallel server system can be described as follows: see Figure \ref{fig:arch1} for an illustrative example of two servers processing two job classes, known as the N-model.
	In this example, compatibility constraints require that server 1 can serve only class 1 jobs, whereas server 2 can process jobs from both classes.
	Service rates are heterogeneous across job classes ($\mu_2\neq\mu_3$) and server types ($\mu_1\neq\mu_2$).
	Upon each job’s arrival, data locality requires the system manager to apply a routing (load-balancing) policy that immediately assigns the job to a buffer near the server; once assigned, the job cannot be relocated.
	For server 2, which processes multiple classes, a local scheduling policy determines the order in which it serves the jobs waiting in its two buffers.
	
	In this paper, we study the steady-state performance of a general parallel server system of the type illustrated in Figure \ref{fig:arch1}, with an arbitrary but fixed number of job classes and server types.
	The model incorporates compatibility constraints, heterogeneous servers with class-dependent service times, and an immediate routing scheme. Our analysis focuses on the weighted-workload-task-allocation (WWTA) routing policy, which will be formally introduced in Section \ref{sec:pss}, combined with any non-idling local scheduling policy. We derive simple, explicit formulas to approximate its steady-state performance in heavy traffic.
	In particular, we prove that the steady-state workload vector exhibits a strong form of state-space collapse under relaxed heavy-traffic and relaxed complete-resource-pooling (CRP) conditions.
	We show that the scaled steady-state workload converges in distribution to an exponential random variable with a mean
	expressed explicitly in terms of the system parameters. All of these results are independent of the local scheduling rule. 
	
	The WWTA policy compares the workloads of servers, defined as the sum of the numbers of jobs associated with each server, weighted by their class-specific mean service times. It can be regarded as a generalization of the join-the-shortest-queue (JSQ) policy, which simply compares the total number of jobs. The WWTA policy was first proposed by \cite{XieYekkLu2016} for a simpler setting with three levels of locality, namely, three distinct service rates among all servers. In that setting, they proved that WWTA with priority scheduling is both throughput optimal and heavy-traffic optimal.
	This work extended \cite{WangZhuYing2014}, who designed the JSQ–MaxWeight policy for a two-level locality model, and it showed that WWTA, when combined with a priority scheduling rule, outperforms JSQ–MaxWeight in heavy-traffic optimality under three-level locality. Since then, little subsequent research has examined WWTA’s performance beyond the original three-level locality setting.

	There are several reasons to focus on the WWTA routing policy. First, WWTA is robust to varying arrival rates: it dynamically routes jobs to servers without requiring arrival-rate information for each job class. The JSQ policy is another popular scheme that likewise does not use the arrival rates. However, this widely adopted routing policy
	can become unstable when service times depend on job class; see Appendix \ref{sec:xmodel} for a case study of the X-model example. Next, WWTA is throughput-optimal in general parallel server systems with heterogeneous job classes and server types; see \cite{DaiHarr2020} for criteria on subcriticality and throughput optimality. WWTA turns out to be a natural candidate for load-balancing algorithms when service times vary across both job classes and server types. Finally, in this paper we prove that the WWTA policy enjoys many desirable properties, including invariance with respect to local scheduling policies and adaptivity to inefficient activities; see Section \ref{sec:contribution} for details.   
	
	\begin{figure}
		\begin{minipage}[t]{0.5\linewidth}
			\centering
			\begin{tikzpicture}[inner sep=1.5mm]
				\node[server,minimum size=0.45in] at (6,4) (S1)  {$S_1$};
				\node[buffer] (B1) [above=0.2in of S1.north] {$B_{1}$};
				\node[server,minimum size=0.45in] (S2) [right=0.8in of S1] {$S_2$};
				\node[buffer] (B2) [above=0.2in of S2.north west] {$B_{2}$};
				\node[buffer] (B3) [above=0.2in of S2.north east] {$B_{3}$};
				\node[routing] (R1) [above=0.4in of B1.north] {$R_1$};
				\node[routing] (R2) [above=0.4in of B3.north] {$R_2$};
				\coordinate (source1) at ($ (B1.north) + (0in, 0.9in)$) {};
				\coordinate (source2) at ($ (B3.north) + (0in, 1in)$) {};
				\coordinate (departure1) at ($ (S1.south) + (0in, -.3in)$) {};
				\coordinate (departure2) at ($ (S2.south) + (0in, -.3in)$) {};
				
				\draw[thick,->] (source1) -| (R1);
				\draw[thick,->] (source2) -| (R2);
				\draw[thick,->] (R1.south) -| (B1);
				\draw[thick,->] (R1.south) -- ++(B2);
				\draw[thick,->] (R2.south) -| (B3);	
				\draw[thick,->] (S1.south) -| (departure1);
				\draw[thick,->] (S2.south) -| (departure2);
				\draw[thick,->] (B1.south) -| (S1);
				\draw[thick,->] (B2.south) -| (S2.north west);
				\draw[thick,->] (B3.south) -| (S2.north east);
				
				\path[thick]
				;
				
				\node [below ,align=left] at ($(departure1)+(0in,0)$) {Server 1\\ departures};
				\node [below,align=left] at ($(departure2)+(0in,0)$) {Server 2\\ departures};
				\node [above,align=left] at (source1.south) {class 1\\ arrivals};
				\node [above,align=left] at (source2.south) {class 2\\ arrivals};
				
				\node [left,align=left] at ($(source1)+(0in,-.2in)$) {\textbf{$\lambda_1=1.3\rho$}};
				\node [left,align=left] at ($(source2)+(0in,-.25in)$) {\textbf{$\lambda_2=0.4\rho$}};
				
				\node [left,align=left] at ($(B1)+(0in,-.3in)$) {\textbf{$\mu_1=1$}};
				\node [left,align=left] at ($(B2)+(0in,-.3in)$) {\textbf{$\mu_2=0.5$}};
				\node [right,align=left] at ($(B3)+(0in,-.3in)$) {\textbf{$\mu_3=1$}};	
			\end{tikzpicture}
			\caption{Architecture 1.}
			\label{fig:arch1}
		\end{minipage}
		\begin{minipage}[t]{0.5\linewidth}
			\centering
			\begin{tikzpicture}[inner sep=1.5mm]
				\node[server,minimum size=0.45in] at (6,4) (S1)  {$S_1$};
				\node[buffer] (B1) [above=0.6in of S1.north] {$B_{1}$};
				\node[server,minimum size=0.45in] (S2) [right=0.8in of S1] {$S_2$};
				\node[buffer] (B2) [above=0.6in of S2.north] {$B_{2}$};
				\coordinate (source1) at ($ (B1.north) + (0in, .5in)$) {};
				\coordinate (source2) at ($ (B2.north) + (0in, .5in)$) {};
				\coordinate (departure1) at ($ (S1.south) + (0in, -.3in)$) {};
				\coordinate (departure2) at ($ (S2.south) + (0in, -.3in)$) {};
				
				\draw[thick,->] (source1) -| (B1);
				\draw[thick,->] (source2) -| (B2);
				
				\draw[thick,->] (S1.south) -| (departure1);
				\draw[thick,->] (S2.south) -| (departure2);
				\draw[thick,->] (B1.south) -- ++ (S1);
				\draw[thick,->] (B1.south) -- ++ (S2);
				\draw[thick,->] (B2.south) -- ++ (S2);
				
				\path[thick]
				;
				
				\node [below ,align=left] at ($(departure1)+(0in,0)$) {Server 1\\ departures};
				\node [below,align=left] at ($(departure2)+(0in,0)$) {Server 2\\ departures};
				\node [above,align=left] at (source1.south) {class 1\\ arrivals};
				\node [above,align=left] at (source2.south) {class 2\\ arrivals};
				
				\node [left,align=left] at ($(source1)+(0.02in,-.2in)$) {\textbf{$\lambda_1$}};
				\node [left,align=left] at ($(source2)+(0.02in,-.2in)$) {\textbf{$\lambda_2$}};
				
				\node [left,align=left] at ($(B1)+(-.1in,-.6in)$) {\textbf{$\mu_{1}$}};
				\node [left,align=left] at ($(B2)+(-.6in,-0.7in)$) {\textbf{$\mu_{2}$}};
				\node [left,align=left] at ($(B2)+(.4in,-.6in)$) {\textbf{$\mu_{3}$}};
			\end{tikzpicture}
			\caption{Architecture 2.}
			\label{fig:arch2}
		\end{minipage}
	\end{figure}
	For computing systems as illustrated by Figure \ref{fig:arch1}, each arriving job is immediately routed to a buffer of the server according to a routing policy.
	By contrast, some parallel server systems may allow the routing of jobs to be delayed; Figure \ref{fig:arch2} shows an example of this type.
	For future reference, we call the system with immediate routing Architecture 1 and the system with delayed routing Architecture 2.
	Parallel server systems of Architecture 2 have been studied extensively for more than two decades.
	For example, \cite{Stol2004} analyzed MaxWeight scheduling policies in generalized switch models that include the discrete-time version of Architecture 2 as a special case.  \cite{HarrLope1999}, \cite{BellWill2005}, and \cite{AtaKumar2005} designed various scheduling policies for Architecture 2 and proved that their proposed policies are asymptotically optimal in heavy traffic under the CRP condition.
    
	On Architecture 2, the notion of heavy traffic is defined via a {\it{static allocation problem}}, a linear program (referred to as the LP throughout the paper) that minimizes the utilization of the busiest server and yields a nominal plan for how each server allocates time across job classes. This mathematical formulation does not distinguish between architectures, and therefore can also be applied to our analysis on Architecture 1. In nearly all studies using the static allocation problem, the LP is assumed to have a unique solution and the analysis is built on that uniqueness.
	In this paper, we drop that assumption and work with relaxed heavy-traffic and relaxed CRP conditions. 
	A related idea of non-uniqueness appears in \cite{AtaCastRei2023}, which considers an ``extended" heavy-traffic condition, but our analysis proceeds under a different set of assumptions; see Section \ref{sec:contribution} for details. In the end, it is worth emphasizing that the WWTA policy itself does not depend on heavy-traffic or CRP assumptions, nor on arrival-rate information or the LP solution; these conditions are introduced solely to facilitate the analysis of its performance.

	This paper remains within the continuous-time Markov chain (CTMC) framework by assuming Poisson arrivals and exponentially distributed job sizes.
	In future work, we plan to extend the results to parallel server systems with general arrival processes and general job-size distributions, using the basic adjoint relationship (BAR) approach recently developed by \cite{BravDaiMiya2017, BravDaiMiya2024} and \cite{GuangXuDai2025}. The BAR approach has also been used under the name ``drift
	method'' in the literature for discrete-time versions of
	stochastic models; see, for example, \cite{EryiSrik2012}, \cite{MaguSrik2016}, \cite{HurtMagu2020}.

	\begin{figure}[h]
		\centering
		\includegraphics[width=0.8\linewidth]{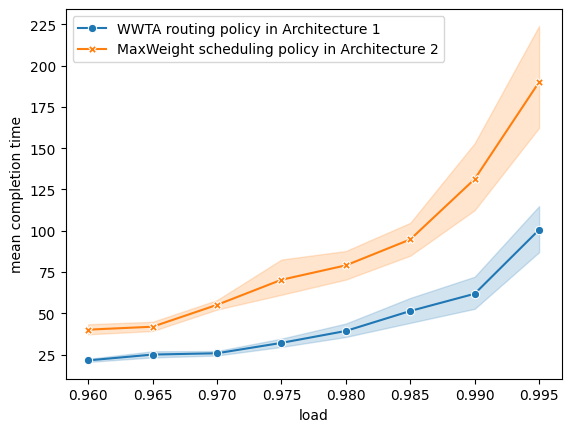}
		\caption{Mean completion time in the N-model.}
		\label{fig:comparisonN}
	\end{figure}		
	To conclude, we perform a case study comparing the performance of the two architectures when a system designer can choose between them. Here, we compare Architecture 1 under the WWTA policy and Architecture 2 under the MaxWeight policy.
	For MaxWeight, we use weights based on the number of jobs and mean service rates --- the same system information required by WWTA.
	For WWTA, we employ the shortest-mean-processing-first rule as the local scheduling policy.
	Both policies are implemented on the N-models shown in Figures \ref{fig:arch1} and \ref{fig:arch2}, respectively, using the same system parameters as in Figure \ref{fig:arch1}; the system load is denoted by~$\rho$.
	Figure \ref{fig:comparisonN} shows that, in terms of mean completion time, the WWTA routing policy in Architecture 1 outperforms the MaxWeight scheduling policy in Architecture 2.
	Clearly, an optimal scheduling policy in Architecture 2 should dominate WWTA combined with any scheduling rule, but optimal policies are difficult to obtain. Furthermore, in Architecture 1, because WWTA balances the workload, servers have considerable flexibility to employ different local scheduling policies; see a case study of the W-model in Section \ref{sec:simulation}. For computing centers, this means operators can freely search for optimal local scheduling policies without jeopardizing the system’s load balance.
	Finally, we emphasize that in this paper we focus on the WWTA routing policy in Architecture 1 to analyze its asymptotic performance in heavy traffic.

	\subsection{Our contribution}\label{sec:contribution}
    We summarize our main contributions in this section.
	\paragraph{A strong form of state-space collapse under general parallel server systems.}
	Our first contribution is to prove state-space collapse of the server workloads under the WWTA policy in parallel server systems. We are the first to deal with the general case of two sources of heterogeneity in both jobs and servers, and to perform analysis under the relaxed heavy-traffic and relaxed CRP conditions. 
	We show that the scaled steady-state server workload converges in distribution to an exponential random variable whose parameter is independent of the local scheduling rule. Accompanying these findings is a novel analytical approach: we establish the flow-balance equation via Laplace transforms and derive this result through an algebraic interchange of limits, a technique that, to the best of our knowledge, has not previously been reported in the literature. This technique also has potential for broader applications, including the performance analysis of other routing and scheduling policies in both architectures.
    
	\paragraph{Relaxed complete-resource-pooling condition.}
	As mentioned above, the static allocation problem and its LP solution prescribe the fraction of time each server devotes to each class. Classical literature commonly relies on the assumption that the LP solution is unique, a property central to both analysis and policy design. Our results show that uniqueness can be nonessential in our analysis: the aforementioned heavy-traffic limits still hold under the relaxed CRP condition that does not make a uniqueness assumption on the LP but yields a unique solution to its dual problem. The only work we could find in the literature that studies the non-uniqueness is a stream of papers related to \cite{AtaCastRei2023}. While they assume the unique dual solution from the outset and identify it by assuming decomposable service rates, we prove the dual uniqueness as a direct consequence of the relaxed CRP condition. 	
	
	\paragraph{Adaptivity of the WWTA to Non-Basic (Inefficient) Activities}
	Following \cite{HarrLope1999}, a combination of a server and a job class is called an \emph{activity}.  
	Each LP solution of the static allocation problem specifies the fraction of time that each activity should be activated.  
	Under the relaxed heavy-traffic condition, if an activity receives zero allocation in \emph{every} LP solution, we call it a \emph{non-basic} activity.  
	Intuitively, such an activity should not be activated, so an ideal routing policy would never send jobs of that class to the corresponding server.  
	However, a policy that enforces such a constraint strictly is fragile with respect to changing arrival rates, because the LP solution is highly sensitive to those rates. The WWTA routing policy, by contrast, does not depend on arrival rates and is therefore expected to be robust to variations in those rates.  
	Surprisingly, part of the proof of our main results reveals that, despite being arrival-rate blind (and hence LP-blind), the WWTA policy adaptively achieves the desired performance as if the LP solution were known, thereby avoiding non-basic activities. This is accomplished by showing:  (a) WWTA routes only a \emph{negligible} fraction of jobs to non-basic servers, (b)
	each server spends a \emph{negligible} fraction of time processing non-basic job classes, and (c) the steady-state number of jobs in non-basic buffers is \emph{negligible} under heavy-traffic scaling.  This is the first theoretical justification of the adaptivity of WWTA with respect to the implicit elimination of inefficient activities.
	
	\subsection{Organization}
	The rest of the paper is organized as follows. Section \ref{sec:pss} formally introduces the WWTA policy. Section \ref{sec:main} states the assumptions and presents the main result. Section \ref{sec:simulation} describes two types of service policies and implements a W-model with specified scheduling rules to compare performance. Sections \ref{sec:outline1}–\ref{sec:outline2} provide supporting results, with Section \ref{sec:ssc_moment} focusing on state-space collapse.  
	The proof of the main result appears in Section \ref{sec:mainproof}.	
	
	\section{Parallel server systems with heterogeneous servers  and policies}\label{sec:pss}
	\begin{figure}
		\centering
		\begin{tikzpicture}[inner sep=1.5mm]
			\node[server,minimum size=0.45in] at (6,4) (S1)  {$S_1$};
			\node[buffer] (B1) [above=0.25in of S1.north] {$B_1$};
			\node[server,minimum size=0.45in] (S2) [right=0.7in of S1] {$S_2$};
			\node[server,minimum size=0.45in] (S3) [right=0.7in of S2] {$S_k$};
			\node[server,minimum size=0.45in] (S4) [right=0.7in of S3] {$S_K$};
			\node[buffer] (B2) [above=0.3in of S2.north west] {$B_2$};
			\node[buffer] (B3) [above=0.3in of S2.north east] {$B_3$};
			\node[buffer] (B4) [above=0.25in of S3.north] {$B_j$};
			\node[buffer] (B5) [above=0.3in of S4.north west] {$B_{J-1}$};
			\node[buffer] (B6) [above=0.3in of S4.north east] {$B_{J}$};
			\node[routing] (R1) [above=0.4in of B1.north] {$R_1$};
			\node[routing] (R2) [above=0.4in of B4.north] {$R_i$};
			\node[routing] (R3) [above=0.4in of B6.north] {$R_I$};
			\node[center](C)[right=0.3in of B3]{...};
			\node[center](C)[right=0.3in of B4]{...};
			\node[center](C)[right=1in of R1]{...};
			\node[center](C)[right=0.5in of R2]{...};	
			\coordinate (source1) at ($ (B1.north) + (0in, 0.9in)$) {};
			\coordinate (source2) at ($ (B4.north) + (0in, 0.9in)$) {};
			\coordinate (source3) at ($ (B6.north) + (0in, 0.9in)$) {};
			\coordinate (departure1) at ($ (S1.south) + (0in, -.25in)$) {};
			\coordinate (departure2) at ($ (S2.south) + (0in, -.25in)$) {};
			\coordinate (departure3) at ($ (S3.south) + (0in, -.25in)$) {};	
			\coordinate (departure4) at ($ (S4.south) + (0in, -.25in)$) {};	
			\draw[thick,->] (source1) -| (R1);
			\draw[thick,->] (source2) -| (R2);
			\draw[thick,->] (source3) -| (R3);	
			\draw[thick,->] (R1.south) -| (B1);
			\draw[thick,->] (R1.south) -- ++(B2);
			\draw[thick,->] (R2.south) -- ++ (B3);	
			\draw[thick,->] (R2.south) -| (B4);
			\draw[thick,->] (R3.south) -- ++(B6);
			\draw[thick,->] (R2.south) -- ++ (B5);	
			\draw[thick,->] (S1.south) -| (departure1);
			\draw[thick,->] (S2.south) -| (departure2);
			\draw[thick,->] (S3.south) -| (departure3);			\draw[thick,->] (S4.south) -| (departure4);	
			\draw[thick,->] (B1.south) -| (S1);
			\draw[thick,->] (B2.south) -| (S2.north west);
			\draw[thick,->] (B3.south) -| (S2.north east);
			\draw[thick,->] (B4.south) -| (S3);
			\draw[thick,->] (B5.south) -| (S4.north west);
			\draw[thick,->] (B6.south) -| (S4.north east);
			\path[thick]
			;
			
			\node [below ,align=left] at ($(departure1)+(0in,0)$) {Server 1\\ departures};
			\node [below,align=left] at ($(departure2)+(0in,0)$) {Server 2\\departures};
			\node [below,align=left] at ($(departure3)+(0in,0)$) {Server k\\departures};
			\node [below,align=left] at ($(departure4)+(0in,0)$) {Server K\\departures};
			\node [above,align=left] at (source1.south) {class 1\\ arrivals};
			\node [above,align=left] at (source2.south) {class i\\ arrivals};
			\node [above,align=left] at (source3.south) {class I\\ arrivals};
			
			\node [left,align=left] at ($(source1)+(0in,-.2in)$) {\textbf{$\lambda_1$}};
			\node [right,align=left] at ($(source2)+(0in,-.2in)$) {\textbf{$\lambda_i$}};
			\node [right,align=left] at ($(source3)+(0in,-.2in)$) {\textbf{$\lambda_I$}};
			
			\node [left,align=left] at ($(B1)+(0in,-.4in)$) {\textbf{$\mu_1$}};
			\node [left,align=left] at ($(B2)+(0in,-.4in)$) {\textbf{$\mu_2$}};
			\node [right,align=left] at ($(B3)+(0in,-.4in)$) {\textbf{$\mu_3$}};	
			\node [right,align=left] at ($(B4)+(0in,-.4in)$) {\textbf{$\mu_j$}};	
			\node [left,align=left] at ($(B5)+(0in,-.4in)$) {\textbf{$\mu_{J-1}$}};	
			\node [right,align=left] at ($(B6)+(0in,-.4in)$) {\textbf{$\mu_J$}};	
		\end{tikzpicture}
		\caption{General parallel server system} 
		\label{fig:g}
	\end{figure}
	This paper deals with general parallel server systems with
	heterogeneous servers and classes, as illustrated in Figure $\ref{fig:g}$.
	In such a system, there are $K$ servers that process $I$
	classes of jobs.  Let $\mathcal{I} = \{1,\ldots, I\}$ be the
	set of classes, and $\mathcal{K} = \{1,\ldots, K\}$ be the set
	of servers.  For each class $i\in\mathcal{I}$, its job-arrival
	process is assumed to be Poisson with rate $\lambda_i$.  Each
	class $i$ job is processed by one of the servers, determined by a load-balancing algorithm.  After being
	processed by the selected server, the job leaves the
	system. Each server $k\in \mathcal{K}$ may be cross-trained and can process jobs from multiple classes. The processing times
	for server $k$ to process class $i$ jobs are independent and
	identically distributed exponential random variables with mean
	$m_{ik}$.  We call it an activity of type $j=(i,k)$ when server
	$k$ processes a class $i$ job.  We set $\mu_{ik}=1/m_{ik}$ to
	be the service rate for type $j=(i,k)$ activities. The total
	number of activities is denoted by $J$, which is at most
	$I\times K$.  We denote $\mathcal{I}(k)\subset\mathcal{I}$ as the
	set of classes that server $k$ can process, and 
	$\mathcal{K}(i)\subset\mathcal{K}$  the set of  servers that can process class $i$ jobs.
	
	Each class $i$ arrival is immediately routed to one of the
	servers in $\mathcal{K}(i)$, say, server
	$k\in \mathcal{K}(i)$, following a load-balancing algorithm.
	If server $k$ is free, the job begins processing
	immediately; otherwise, it waits in buffer $j=(i,k)$ until the server is ready to process the job according to a service policy to be specified below.  Server $k$ maintains multiple
	buffers, one for each type $j=(i,k)$ with
	$i\in \mathcal{I}(k)$.  It is known that the join-the-shortest-queue
	(JSQ) load-balancing algorithm is sometimes not throughput optimal; see an
	example in Appendix \ref{sec:xmodel}. In this paper, we focus on the
	weighted\--workload\--task\--allocation (WWTA) load-balancing
	algorithm, also known as the WWTA routing policy, which we now formally introduce below. 
	
	\paragraph*{Weighted workload task allocation (WWTA) routing policy.}

	It is a load-balancing routing policy first proposed
	by \cite{XieYekkLu2016}. Unlike the JSQ routing policy, which simply compares the
	queue lengths among servers in its routing decisions, the WWTA policy
	compares workloads among servers in its decisions. In the literature, the
	workload of a server at time $t$ is defined as the virtual waiting
	time at time $t$, which is the waiting time of a fictitious job arriving
	at the server at time $t$. In this paper, we assume the mean processing times are observable, but not the actual
	processing times. Thus, the virtual waiting times are non-observable
	quantities for the load-balancing algorithm. For our purpose, we define workload
	for server $k$ to be
	$$W_k({z})=\sum_{i\in \mathcal{I}(k)}m_{ik}z_{ik},$$
	where $z=(z_{ik})$ is the vector of queue lengths in the system. Here,
	component $z_{ik}$ is the number of jobs in buffer $j=(i,k)$,
	including possibly the one in service. We assume at each time $t$, the
	queue length vector $z$ is observable.  The WWTA policy routes an arriving
	job from class $i$ to server $k$ which achieves
	$$\min_{k\in \mathcal{K}(i)}m_{ik}W_k({z}).$$
	
	\paragraph*{Service policy.}
	Each server may maintain multiple buffers, as jobs in different buffers have different service rates. We need to further specify the service or scheduling policy that
	dictates, for each server, from which buffer to choose a job to
	process next. It is known that under the WWTA routing policy, any non-idling
	scheduling policy is throughput optimal; see, for example,
	Section 11.8 of \cite{DaiHarr2020}. By non-idling, we mean each
	server must be busy processing jobs whenever there are jobs
	waiting in its buffers. Throughout the paper, we use a general notation $\mathcal{P} =\left(P_{ik}({z})\right) $ to represent such a non-idling scheduling policy, with $P_{ik}({z})$ indicating the fraction of effort that server $k$ spends on class $i$ jobs given the queue length vector $z$. Our main results do not depend on the specific scheduling policies; therefore, we delay the detailed discussion of candidate policies and explicit forms of $P_{ik}(z)$ to Section~\ref{sec:simulation}.
	
	\section{Assumptions and main results}\label{sec:main}
	In this section, we formally introduce two critical assumptions and present the main results. One assumption is the (relaxed) heavy-traffic condition and the other is the (relaxed) complete-resource-pooling condition. These
	two assumptions are formulated through solutions to a linear
	program (LP), which was first introduced in
	\cite{HarrLope1999}. For this purpose, it is useful to adopt the
	compact notational system used there. Central to
	that system is the concept of activities. For the
	parallel server systems introduced in Section~\ref{sec:pss},
	an activity $j$ corresponds to a buffer $(i,k)$ for a specific
	job class $i$ and a server $k$. Let $J$ denote the
	total number of activities. Formally, \cite{HarrLope1999} defines an $I\times J$ \emph{constituency} matrix $C$ and
	a $K\times J$ \emph{resource-consumption} matrix $A$ as follows:
	$$C_{i j}=\left\{\begin{array}{ll}1, & \text { if activity } j \text { processes class } i; \\ 0, & \text { otherwise. }\end{array}\right.$$
	
	$$A_{k j}=\left\{\begin{array}{ll}1, & \text { if server } k \text { performs activity } j; \\ 0, & \text { otherwise. }\end{array}\right.$$
	Given these two matrices, each activity $j\in\mathcal{J}$ is
	uniquely associated with a class $i$ and a server $k$,
	allowing us to write $j=(i,k)$.  We assume $J$ activities are
	ordered from $1$ to $J$.  We denote the  service rate vector of
	$J$ activities by
	$$\mu=(\mu_{1},\ldots, \mu_J)^\top. $$
	All vectors are envisioned as column vectors and the symbol $\top$ denotes the transpose.
	Define \emph{output} matrix
	$$ R = C \operatorname{diag}(\mu),
	$$
	where $R_{ij}$ is the job departure rate from buffer
	$(i,k)$ when $j=(i,k)$ and server $k$ devotes all its effort
	on the buffer.
	%
	We  consider the
	following LP, which is known as the 
	\emph{static allocation problem}:
	\begin{equation}
		\label{eq:primal}
		\begin{aligned}
			\min\quad &\rho \\
			\text{s.t.} \quad&R x=\lambda,\\
			\quad&
			Ax \leq \rho 
			e,\\
			\quad& x, \rho\geq 0.
		\end{aligned}
	\end{equation}
	where $\lambda = (\lambda_1,\ldots,\lambda_I)^\top$, $x=(x_1,\ldots,x_J)^\top$, $e=(1,\ldots,1)^\top\in \mathbb{R}^K$. The vector $x$ can be regarded as a processing plan, with each element $x_j$ being interpreted as the long-run proportion of time that activity $j$ is processed by its server, and $\rho$ being interpreted as the long-run utilization of the busiest server. 
	We define the following \emph{relaxed heavy traffic} condition, where ``relaxed" removes the uniqueness assumption commonly imposed in the literature:
	\begin{assumption}[(Relaxed)  Heavy Traffic]\label{assu1}
		The parallel server system is assumed to be in (relaxed)
		\emph{heavy traffic}, namely,  the static allocation problem
		$(\ref{eq:primal})$ has an optimal solution $(x^*, \rho^*)$ satisfying         \begin{align}\label{eq:ht}
			\rho^*=1  \quad \text{ and } \quad  A x^{*}=e.                \end{align}
	\end{assumption}
	
	\begin{remark}\label{rmk:relaxedHT_1}
		In Assumption~\ref{assu1}, we do not assume that (\ref{eq:primal}) has a unique
		solution.  The uniqueness is assumed in most of the related literature; see, for example, \cite{HarrLope1999}, \cite{Harr2000}, \cite{BellWill2005}, where the analyses rely critically on this property. Non-uniqueness
		of the LP solutions means that there may exist multiple
		plans to allocate servers with all servers being 100\% busy.
		In a recent paper \cite{AtaCastRei2023}, the authors likewise did not assume uniqueness, coining the term ``extended heavy traffic condition" for the case $\rho^*=1$ when $x^*$ is not unique.
	\end{remark}
	The notion of basic activities, allowing for possible non-uniqueness of the LP solution $(x^*,\rho^*)$, is defined as follows.  Activity $j={(i,k)}$ is called a \textit{basic activity associated with some $x^*$} if $x^*$ satisfies (\ref{eq:ht}) and   $x^*_{j}> 0$; otherwise, it is called a \textit{non-basic activity associated with $x^*$}. We call an activity \textit{non-basic}, if it is non-basic for every optimal LP solution. The communicating servers have the following definition: 
	\begin{definition}
		Servers $k$ and $k'$ are said
		to \textit{communicate directly}, if there exists some $x^*$, such that both $j =(i,k)$ and $j'=(i,k')$, for some class $i$, are basic activities associated with $x^*$. Server $k$ and $k'$ are said
		to \textit{communicate}, 
		if there exists a sequence of servers $k_1,\ldots, k_{\omega}$ such that $k_1 = k, k_{\omega}=k'$, and $k_{\alpha}$ communicates with $k_{\alpha+1}$ directly for $\alpha=1,\ldots,\omega-1$.  
	\end{definition}
	
	Then we introduce
	our second assumption as follows:
	\begin{assumption}[(Relaxed) Complete Resource Pooling]\label{assu2}
		There exists an optimal solution $(x^*, \rho^*)$
		that satisfies (\ref{eq:ht}) and all servers communicate.
	\end{assumption}
	\begin{remark}
		Assumption~\ref{assu2} includes Assumption~\ref{assu1}. Under Assumption \ref{assu2}, when we search for basic activities to communicate among all the servers, we are allowed to utilize different optimal solutions $x^*$.
	\end{remark}
	Next we state two lemmas related to the dual of LP (\ref{eq:primal}).  The dual LP to the static allocation problem (\ref{eq:primal}) is defined as follows:
	\begin{equation}\label{eq:dual}
		\begin{aligned}
			\max \quad & v\lambda \\
			s.t. \quad&
			vR\leq uA,\\
			& ue \leq 1,\\
			&u\geq 0,
		\end{aligned}
	\end{equation}
	where $v=(v_1,\ldots,v_I)^\top$, $u=(u_1,\ldots,u_K)^\top$.
	
	\begin{lemma}\label{lem:pd1}
		Under Assumption \ref{assu1}, there exists an optimal solution $({v}^*,{u}^*)$ to the dual LP $(\ref{eq:dual})$, satisfying
		\begin{enumerate}
			\item[(i)]  $ \sum_{i=1}^{I}\lambda_iv^*_i=1$.
			\item[(ii)]  $\sum_{k=1}^{K}u^*_k =1$, $u^*_k\geq 0$.
			\item[(iii)]  If  $x^{*}_{ik}> 0$, then $\mu_{ik}v^*_i=u^*_k$.
		\end{enumerate}	
	\end{lemma}
	
	\begin{lemma}\label{lem:pd2}
		If Assumptions \ref{assu1} \& \ref{assu2} hold, then
		
		\begin{enumerate}
			\item[(i)]  The optimal dual LP solution $({v}^*,{u}^*)$ is unique. 
			\item[(ii)]  $u^*_k>0, v^*_i>0, \quad\forall k\in\mathcal{K}, i\in\mathcal{I}$.
		\end{enumerate}
	\end{lemma}
	
	\begin{remark}
		Lemma \ref{lem:pd2} is crucial in establishing our explicit performance
		results, especially under the condition of non-unique optimal solutions to the static allocation problem (\ref{eq:primal}). Under  Assumptions \ref{assu1} \& \ref{assu2}, if the primal solution of static allocation problem is further assumed to be unique, our lemma is fully covered by \cite{HarrLope1999}; indeed, our Lemma $\ref{lem:pd2}~(i)$ is one of the equivalent statements of the \textit{complete resource pooling}, and our Lemma $\ref{lem:pd2}~(ii)$ is a corollary in their paper. 
		In the proof of Lemma \ref{lem:pd2}, we provide a new method to obtain the optimal dual solution, which does not depend on the uniqueness of the primal solution.
	\end{remark}
	
	\begin{remark}\label{rmk:relaxedHT_2}
		We directly obtain the unique dual solution via Lemma \ref{lem:pd2}. \cite{AtaCastRei2023} argued differently by first assuming the dual LP solution is unique, and then identified it by further assuming decomposable service rates: there exist $\alpha_i, \beta_k$ such that $\mu_{ik}=\alpha_i\beta_k$ for any $ (i,k)\in\mathcal{J}$.  
	\end{remark}
	
	Lemmas \ref{lem:pd1} and \ref{lem:pd2} will be proved in Appendices \ref{sec:pd1_p} and \ref{sec:pd2_p}, respectively. Throughout the following discussion, we consider the system for which the Assumptions \ref{assu1} \& \ref{assu2} hold, i.e. the heavy traffic system that satisfies the CRP condition. We have proved in Lemma $\ref{lem:pd2}$ that the optimal dual solution $({v}^*,{u}^*)$ is unique, so we will henceforth use this unique dual solution and omit the superscript ``$^*$'' for simplicity. 
	
	In discussing the heavy-traffic regime, we consider a sequence of parallel server systems indexed by ${r}\in(0,1)$. To keep the presentation clean, only the arrival rates are parameterized by ${r}$. That is, each system in the sequence has an arrival rate
	$$\lambda_i^{(r)}=\lambda_i(1-r),\quad 0<r<1$$
	for class $i$, with other system parameters not depending on
	${r}$. With $\lambda^{(r)}$ replacing $\lambda$, the LP
	(\ref{eq:ht}) has optimal value $\rho^{(r),*}=1-r$.  Thus,
	the load of the sequence of systems approaches 100\%, as $r$
	goes to zero. When $0< r< 1$, the parallel server system under
	the WWTA policy is proven to be throughput optimal
	(\cite{DaiHarr2020}). Thus, for each $r\in (0,1)$, the queue
	length process, as a continuous-time Markov chain, has a unique
	stationary distribution. For each $r\in (0,1)$, we denote by
	${Z}^{(r)}$ the queue length vector that has the stationary distribution.
	We call $Z^{(r)}$ the steady-state queue length 
	in the $r$th system. Throughout the paper, all of the
	results will be discussed based on this steady-state queue
	length ${Z}^{(r)}$.  We adopt the convention that $f(r)=O(g(r))$ and $f(r)=o(g(r))$
	mean, respectively, 
	$$
	\limsup _{r \downarrow 0}\left|\frac{f(r)}{g(r)}\right|<\infty , \quad \lim _{r \downarrow 0} \frac{f(r)}{g(r)}=0.      
	$$

	\begin{theorem}[Limit Distribution for Workload]{\label{thm:main}}
		Assume Assumptions \ref{assu1} \& \ref{assu2} hold. Under the WWTA policy and any non-idling scheduling policy $\mathcal{P}$, the scaled workload of each server converges in distribution to a one-dimensional exponential random variable. Furthermore,
		$$\begin{aligned}
			\lim\limits_{r\downarrow 0}r \Big(W_1\big({Z}^{(r)}\big),\ldots,W_K\big({Z}^{(r)}\big)\Big) \stackrel{d}{\rightarrow}\left(u_1,\ldots,u_K\right) X,
		\end{aligned}$$
		where $X$ is a random variable that follows exponential distribution with mean
		$$\frac{\sum_{i=1}^{I}\lambda_iv_i^2}{\sum_{k=1}^{K}u_k^2}.$$
	\end{theorem}
	In Theorem \ref{thm:main}, the limit distribution depends only on the arrival rates $\lambda_i, i\in\mathcal{I}$ and the unique optimal dual solution $({v, u})$. It does not  depend on the specific scheduling policy $\mathcal{P}$. The proof of Theorem \ref{thm:main} is provided in Section \ref{sec:mainproof}.
	The key ingredient for proving Theorem~\ref{thm:main} is  \textit{state-space collapse}, which will be presented in Section \ref{sec:ssc_moment}. Other supporting results will be presented in Section \ref{sec:outline1} and Section \ref{sec:outline2}.

	\section{Service Policies and an  Example}\label{sec:simulation}
	Recall that each server $k\in \mathcal{K}$ can process different
	jobs in buffers $j=(i,k)$ for $i\in\mathcal{I}(k)$. A scheduling (service)
	policy is needed to decide which buffer’s jobs to process next. Here we introduce two types of scheduling
	policies to accompany WWTA, and later in this section we compare their performance in an example.  We show that, under Architecture 1, WWTA allows a freer choice of service policy and greater flexibility than the MaxWeight policy under Architecture 2.
	
	The first service policy is the head-of-line proportional
	processor sharing (HLPPS) scheduling policy studied in \cite{Bram1998}.
	Under HLPPS, all nonempty buffers receive
	service simultaneously. 
	For each server $k\in\mathcal{K}$, 
	the proportion
	of effort that server $k$ allocates among classes $\mathcal{I}(k)$ at any
	time is given by 
	\begin{align}
		\label{eq:hlpps}
		P_{ik}(z)=\frac{z_{ik}}{\sum_{i\in \mathcal{I}(k)}z_{ik}}, \quad i\in \mathcal{I}(k), \end{align}
	when the queue length is $z=(z_{ik})$. Here and later, we
	adopt the convention that ${0}/{0}=0$. Thus, when
	$\sum_{i\in \mathcal{I}(k)}z_{ik}=0$, server $k$ idles.  Therefore, the
	instantaneous service rate for activity $j=(i,k)$ is
	$\mu_{ik}P_{ik}(z)$.  
	
	Furthermore, the allocation in (\ref{eq:hlpps}) can also be generalized to 
	\begin{equation}\label{eq:ghlpps}
		P_{ik}({z})=\frac{c_{ik}z_{ik}}{\sum_{i\in \mathcal{I}(k)}c_{ik}z_{ik}}, \quad  i\in\mathcal{I}(k), k\in\mathcal{K},
	\end{equation}
	where $c=(c_{ik})>0$ is a given vector of positive numbers.
	We call the service policy using allocation (\ref{eq:ghlpps})
	a  \emph{generalized HLPPS} policy with weights $c=(c_{ik})$. The
	implementation of HLPPS policy does not require the
	knowledge of system parameters, such as arrival rates or
	service rates. It also does not depend on the routing
	policies, but only on the proportion of the queue sizes at
	each buffer of the servers.
	
	The second type of service policy is the static buffer
	priority (SBP) scheduling policy. Each SBP policy specifies a fixed ranking of the buffers at each server. Given a ranking, we write
	$(i',k)\prec (i,k)$ to denote that buffer $(i',k)$ has
	(preemptive) higher priority than buffer $(i,k)$.  Formally, we
	define the SBP service policy for server $k$ by specifying
	its utilization
	$$P_{ik}({z})=\mathbbm{1}\Big(\sum_{(i',k)\prec (i,k)}z_{i'k}=0,z_{ik}>0 \Big).$$
	Later in this section, by ranking buffers according to the shortest mean
	processing time first, the corresponding SBP service policy
	is shown to have the better performance.
	
	\begin{figure}[h]
		\centering
		\begin{tikzpicture}[inner sep=1.5mm]
			\node[server,minimum size=0.45in] at (6,4) (S1)  {$S_1$};
			\node[buffer] (B1) [above=0.4in of S1.north west] {$B_{11}$};
			\node[server,minimum size=0.45in] (S2) [right=0.7in of S1] {$S_2$};
			\node[buffer] (B2) [above=0.4in of S1.north east] {$B_{21}$};
			\node[buffer] (B3) [above=0.4in of S2.north west] {$B_{22}$};
			\node[buffer] (B4) [above=0.4in of S2.north east] {$B_{32}$};
			\node[vbuffer] (B6) [left=0.05in of B1] {$B_{31}$};
			\node[vbuffer] (B5) [right=0.05in of B4]{$B_{12}$};
			\node[center](C1)[left=0.2in of B1]{};
			\node[center](C2)[right=0.2in of B2]{};
			\node[center](C3)[right=0.2in of B4]{};
			\node[routing] (R1) [above=0.8in of C1.north east] {$R_1$};
			\node[routing] (R2) [above=0.8in of C2.north west] {$R_2$};
			\node[routing] (R3) [above=0.8in of C3.north west] {$R_3$};
			\coordinate (source1) at ($ (C1.north east) + (0in, 1.2in)$) {};
			\coordinate (source2) at ($ (C2.north west) + (0in, 1.2in)$) {};
			\coordinate (source3) at ($ (C3.north west) + (0in, 1.2in)$) {};
			\coordinate (departure1) at ($ (S1.south) + (0in, -.2in)$) {};
			\coordinate (departure2) at ($ (S2.south) + (0in, -.2in)$) {};

			\draw[thick,->] (source1) -| (R1);
			\draw[thick,->] (source2) -| (R2);
			\draw[thick,->] (source3) -| (R3);
			\draw[thick,->] (R1.south) -- ++ (B1);
			\draw[thick,->] (R2.south) -- ++(B2);
			\draw[thick,->, dashed] (R1.south) -- ++ (B5);
			\draw[thick,->, dashed] (R3.south) -- ++ (B6);
			\draw[thick,->] (R2.south) -- ++ (B3);	
			\draw[thick,->] (R3.south) -- ++ (B4);	
			\draw[thick,->] (S1.south) -| (departure1);
			\draw[thick,->] (S2.south) -| (departure2);
			
			\draw[thick,->] (B1.south) -| (S1.north west);
			\draw[thick,->] (B2.south) -| (S1.north east);
			\draw[thick,->] (B3.south) -| (S2.north west);
			\draw[thick,->] (B4.south) -| (S2.north east);
			\draw[thick,->] (B6.south) -- ++ (S1);	
			\draw[thick,->] (B5.south) -- ++ (S2);		
			
			\path[thick]
			;
			
			\node [below ,align=left] at ($(departure1)+(0in,0)$) {Server 1\\ departures};
			\node [below,align=left] at ($(departure2)+(0in,0)$) {Server 2\\ departures};
			
			\node [above,align=left] at (source1.south) {class 1\\ arrivals};
			\node [above,align=left] at (source2.south) {class 2\\ arrivals};
			\node [above,align=left] at (source3.south) {class 3\\ arrivals};
			
			\node [left,align=left] at ($(source1)+(0in,-.15in)$) {\textbf{$\lambda_1=4(1-r)$}};
			\node [right,align=left] at ($(source2)+(0in,-.15in)$) {\textbf{$\lambda_2=1.3(1-r)$}};
			\node [right,align=left] at ($(source3)+(0in,-.15in)$) {\textbf{$\lambda_3=0.4(1-r)$}};	
			
			\node [left,align=left] at ($(B1)+(-0.1in,-.5in)$) {\textbf{$\mu_{11}=8$}};
			\node [right,align=left] at ($(B2)+(0in,-.3in)$) {\textbf{$\mu_{21}=2$}};
			\node [left,align=left] at ($(B3)+(0in,-.5in)$) {\textbf{$\mu_{22}=0.5$}};
			\node [right,align=left] at ($(B4)+(0.1in,-.5in)$) {\textbf{$\mu_{32}=1$}};
			\node [left,align=left] at ($(B1)+(-0.5in,0in)$) {\textbf{$\mu_{31}=0.25$}};
			\node [right,align=left] at ($(B4)+(0.5in,0in)$) {\textbf{$\mu_{12}=0.25$}};
		\end{tikzpicture}
		\caption{W model}
		\label{fig:wmodel}
	\end{figure}
	
	Now we present a simulation study comparing the performance of the system under three policies. The first two combine the WWTA routing policy with one of the two service policies introduced above; these operate under Architecture~\ref{fig:arch1}.  
	The third is the MaxWeight scheduling policy operating under Architecture~\ref{fig:arch2}.
	The simulated system is called the \emph{W model} because it has a W-shaped structure with three job classes and two servers.  
	Figure~\ref{fig:wmodel} depicts the W model under Architecture~\ref{fig:arch1};  
	Architecture~\ref{fig:arch2} uses the same parameters but a different structure; see Figure~\ref{fig:arch2} for reference. We set activities $(3,1)$ and $(1,2)$ as non-basic activities in the W model. Our performance metric is the average completion time, measured from a job’s arrival to its service completion and departure.
	
	We conduct the simulation for eight system-load levels ranging from 96.0\% to 99.5\%.  
	For each combination of load level and policy, we run a discrete-event simulation for 50,000 time units, starting from an empty system, and perform 30 independent replications to obtain 95\% confidence intervals for the average completion time. As shown in Figure~\ref{fig:comparisonW}, WWTA with SBP scheduling and WWTA with HLPPS yield shorter average completion times than MaxWeight. Their
	95\% confidence intervals show almost no overlap, indicating a statistically significant performance difference of these policies.
	
	\begin{figure}[h]
		\centering
		\includegraphics[width=0.8\linewidth]{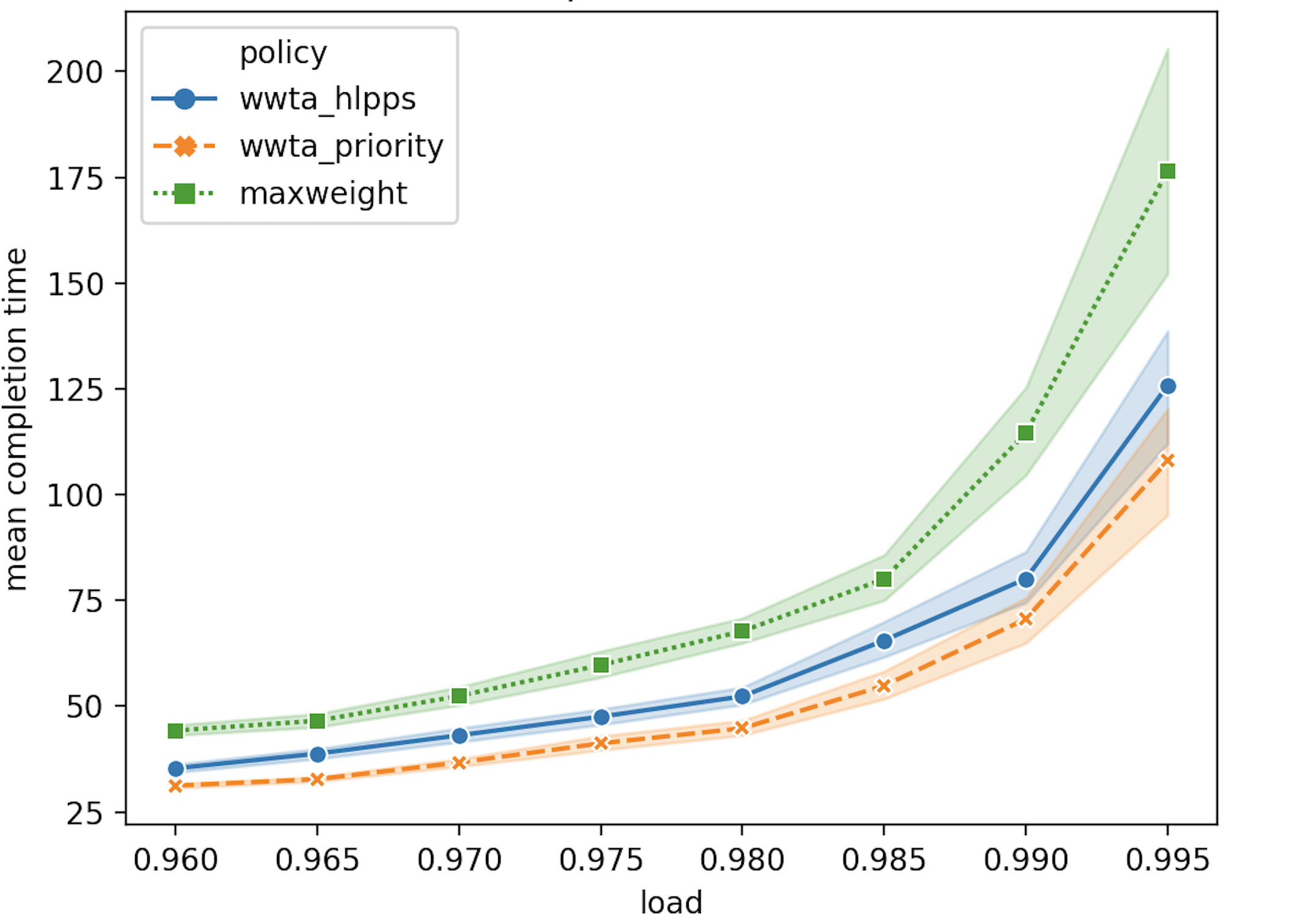}
		\caption{Mean completion time in W-model.}
		\label{fig:comparisonW}
	\end{figure}
	
	\section{Preliminary Results I}\label{sec:outline1}
	
	In this section, we introduce the main technical tool,
	known as the basic adjoint relationship (BAR) approach. We also use
	the BAR to establish a few preliminary results that will be
	used to prove Theorem~\ref{thm:main}.  Recall that under the WWTA
	routing policy with queue length vector $z$, each class $i$
	arrival is routed to a server, denoted as $L^{(i)}(z)$, that is in the set
	$$\argmin_{k\in \mathcal{K}(i)}m_{ik}W_k({z}). $$
	To be concrete, when more than one server achieves the minimum,
	we assume the job is routed to the server that has the
	smallest index. Our results do not depend on how the tie-breaking
	rule is used. It is clear that for each class $i\in \mathcal{I}$,
	$$\sum_{k\in \mathcal{K}(i)}\mathbbm{1}\left(k=L^{(i)}({z})\right) =1.$$
	Recall that $P_{ik}(z)$ is the fraction of effort that
	server $k$ works on a class $i$ job when the queue length is
	$z$ under a generic non-idling scheduling policy
	$\mathcal{P}$. In addition, recall that for server
	$k\in \mathcal{K}$ and class $i\in \mathcal{I}$, we use
	$j=(i,k)$ to denote the activity $j$ for server $k$ processing
	a class $i$ job.  For activity $j=(i,k)$, we let
	${e}_{ik}\triangleq {e}_j =(0,\ldots,1,\ldots,0)^\top\in
	\mathbb{R}^J$ be the unit vector with only the $j$th element
	being nonzero.
	
	For the $r$th parallel server system in the sequence operating
	under the WWTA routing policy and a service policy
	$\mathcal{P}$,  the queue length process is a continuous-time
	Markov chain with \textbf{generator}
	\begin{equation}\label{eq:generator}
		\begin{aligned}
			Gf({z})=
			&\sum_{i=1}^{I}\lambda^{(r)}_i\sum_{k\in \mathcal{K}(i)}\Big[f({z}+{e}_{ik})-f({z})\Big]\mathbbm{1}\left(k=L^{(i)}({z})\right)\\
			+&\sum_{k=1}^{K}\sum_{i\in \mathcal{I}(k)}\mu_{ik}P_{ik}({z})\Big[f({z}-{e}_{ik})-f({z})\Big] \quad z\in \Z_+^J,
	\end{aligned}\end{equation}
	for $f:\Z^J_+\to \R$. The following lemma supports the main tool
	that we will use throughout the proof of Theorem~\ref{thm:main}.
	
	\begin{lemma}\label{lem:steady_exp}
		Under Assumptions \ref{assu1} \& \ref{assu2}, with the WWTA and scheduling $\mathcal{P}$, 
		let $f({z}): \mathbb{Z}^J_+ \rightarrow \mathbb{R}$ be a function. Suppose there exists $n\in \mathbb{N}^+$ and some constant $C>0$ such that  $|f({z})| \leq C\sum_{k=1}^{K}W^{n}_k(z)$ for $z\in \Z_+^J$ (i.e. $f({z})$ is dominated by a polynomial function of workloads).  Then, for each $0<r<1$, the vector of steady-state queue length ${Z}^{(r)}$ satisfies the following basic adjoint relationship (BAR):
		\begin{align}\label{eq:BAR}
			\E\Big[Gf\Big({Z}^{(r)}\Big)\Big]=0.                  
		\end{align}
	\end{lemma}
	Each function $f$ that satisfies (\ref{eq:BAR}) is called
	a test function.  It was argued  in 
	\cite[Proposition 3]{GlynZeev2008} that every bounded function
	$f: \mathbb{Z}^J_+ \rightarrow \mathbb{R}$ is a test function.
	The proof of Lemma
	\ref{lem:steady_exp} is provided in Appendix
	\ref{sec:steady}, using an argument similar to that in Lemma 1 of \cite{BravDaiFeng2016}.

	In Lemma $\ref{lem:pd1}$, we have established that if $(v, u)$ is the optimal dual solution, each activity $j=(i,k)$ satisfies $u_k\geq\mu_{ik}v_i$, and each basic activity further satisfies $u_k=\mu_{ik}v_i$. Denote 
	\begin{align}\label{eq:dik}
		d_{ik} = u_k-\mu_{ik}v_i.
	\end{align}
	When $d_{ik}>0$, activity $(i,k)$ is necessarily non-basic in any optimal LP solution because of complementary slackness. This section ends with the following preliminary result.	
	\begin{lemma}{\label{lem:idle}}
		Under Assumptions \ref{assu1} \& \ref{assu2}, with the WWTA and scheduling $\mathcal{P}$, for each $r\in (0,1)$,
		\begin{align}
			& \sum_{k=1}^K u_k \Prob\Big(\sum_{i\in \mathcal{I}(k)}Z^{(r)}_{ik}=0\Big)+ \sum_{k=1}^{K}\sum_{i\in \mathcal{I}(k)}d_{ik}\E\left[P_{ik}({Z^{(r)}})\right]= r, \label{eq:idle}\\
			& \lambda_i^{(r)}\Prob\Big(k=L^{(i)}\big({Z}^{(r)}\big)\Big)=\mu_{ik}\E\Big[P_{ik}\Big({Z}^{(r)}\Big)\Big].\label{eq:flowbalance}
		\end{align}                  
	\end{lemma}
	Equation (\ref{eq:idle}) implies that each server's idle
	probability is at most $O(r)$ as $r\to 0$. This means that the WWTA
	routing policy is ``efficient'' in that it indeed drives each
	server to $100\%$ utilization under the heavy-traffic
	condition and the CRP condition. Equation
	(\ref{eq:idle}) also implies that
	$$
	\begin{aligned}
		\E\left[P_{ik}({Z^{(r)}})\right] =O(r),  \quad\text{if }  d_{ik}>0.
	\end{aligned}
	$$
	For
	such a non-basic activity, the long-run fraction of time that
	server $k$ processing class $i$ jobs is negligible (of order
	$O(r)$). Equation (\ref{eq:flowbalance}) is a flow balance equation:
	rate into buffer $(i,k)$ must be equal to rate out of the
	buffer. These two equations together conclude that WWTA dynamic
	routing policy is ``smart'': for a non-basic activity with
	$d_{ik}>0$, the fraction of class $i$ jobs routing to server $k$ is
	negligible, namely,
	\begin{equation}\label{eq:nbcon_1}
		\Prob\Big(k=L^{(i)}\big({Z}^{(r)}\big)\Big)= O(r),  \quad \text{if } d_{ik}>0.
	\end{equation}
	
	\begin{proof}[Proof of Lemma~\ref{lem:idle}]
		To prove (\ref{eq:idle}),
		we let $f({z})=\sum_{i=1}^I\sum_{k\in \mathcal{K}(i)}v_iz_{ik}$. Applying the  generator $G$ in (\ref{eq:generator}) to this test function gives
		$$\begin{aligned}
			Gf({z})
			=&\sum_{i=1}^{I}\lambda^{(r)}_iv_i-\sum_{k=1}^{K}\sum_{i\in \mathcal{I}(k)}\mu_{ik}v_iP_{ik}({z})\\
			= &-r +\sum_{k=1}^{K}u_k\mathbbm{1}\Big(\sum_{i\in \mathcal{I}(k)}z_{ik}=0\Big)+\sum_{k=1}^{K}\sum_{i\in \mathcal{I}(k)}d_{ik}P_{ik}({z}),
		\end{aligned}$$
		where the last equality follows from Lemma $\ref{lem:pd1}$ and $(\ref{eq:dik})$. Then  (\ref{eq:idle}) follows from (\ref{eq:BAR}).
		
		To prove (\ref{eq:flowbalance}), 
		we take $f({z})=z_{ik}$.
		Then,
		$$\begin{aligned}
			Gf({z})=&\lambda_i^{(r)}\mathbbm{1}\left(k=L^{(i)}({z})\right)-\mu_{ik}P_{ik}({z}),
		\end{aligned}$$
		and (\ref{eq:BAR}) implies
		$$\begin{aligned}
			\lambda_i^{(r)}\Prob\Big(k=L^{(i)}\big({Z}^{(r)}\big)\Big)=\mu_{ik}\E\Big[P_{ik}\Big({Z}^{(r)}\Big)\Big],
		\end{aligned}$$
		which proves (\ref{eq:flowbalance}).
	\end{proof}
	
	
	

	\section{State-space Collapse }\label{sec:ssc_moment}
	In this section, we establish the following state-space collapse (SSC) result, which plays a critical role in
	proving Theorem~\ref{thm:main}. Recall $u\in\R^K$ is the vector from the optimal dual solution. For each $k\in\mathcal{K}$, denote
	$$T_k({z})=\frac{1}{u_k}W_k({z}),$$ 
	and given $z=(z_{ik})$, denote by $T_{(k)}({z})$ the $k$th smallest value among $\{T_k({z}): k\in\mathcal{K}\}$. 
	\begin{proposition}[State-space collapse]{\label{prop:sscmoment}}
		Assume Assumptions \ref{assu1} \& \ref{assu2} hold. Under
		the WWTA policy and a non-idling scheduling $\mathcal{P}$, for each $n\in \Z_+$,
		there exists a constant $M_n>0$ such that
		\begin{align}
			\label{eq:ssc}
			\E\Big[T_{(K)}\big({Z}^{(r)}\big)-T_{(1)}\big({Z}^{(r)}\big)\Big]^n\leq M_n, \quad \text{ for } r\in (0, r_0),
		\end{align}            
		where $r_0\in(0,1)$ is some constant independent of $n$.
		
	\end{proposition}

	Inequality (\ref{eq:ssc}) implies that a weighted workload
	is balanced among all servers.   It
	actually provides a much stronger state-space collapse than
	what we need in the proof of Theorem~\ref{thm:main}, which 
	only requires (\ref{eq:ssc}) holds for $n$ up to $2$.
	Throughout the paper, we adopt the convention:
	for a sequence $\{a(i)\}$,     
	$$    
	\begin{aligned}
		\sum_{i\in \mathcal{I}(k)}^b a(i) = \sum_{i\in \mathcal{I}(k): d_{ik}=0} a(i),  \quad \sum_{i\in \mathcal{I}(k)}^{nb} a(i) =\sum_{i\in \mathcal{I}(k): d_{ik}>0} a(i),         
	\end{aligned}$$
	where the terms in the first sum include both the basic activities and possibly some non-basic activities $(i,k)$ having $d_{ik}=0$.
	
	Inspired by pioneering paper   \cite{EryiSrik2012}, before proving Proposition \ref{prop:sscmoment}, we start with the following preparation.  Denote $\langle\cdot,\cdot \rangle$ by the inner product and $\|u\|$ by the $\ell_2$ norm on the Euclidean space.
	Let the unitized vector of $u = (u_1, ..., u_K)^\top$ be $$c_u = \left(\frac{u_1}{\|u\|}, ..., \frac{u_K}{\|u\|}\right)^\top.$$ The projection of workload vector $W(z)$ on this unitized vector $c_u$ is 
	$$\langle W(z), c_u\rangle c_u = \left(\frac{1}{\|u\|^2}\sum_{k=1}^{K}u_kW_k(z)\right) \begin{pmatrix}u_1\\\vdots\\ u_K\end{pmatrix}\triangleq W_{\parallel}(z).$$
	where  the dependency of $W_{\parallel}(z)$ on $u$ is omitted when $u$ is fixed in the background. Then denote
	$$W_{\perp}(z)\triangleq W(z) - W_{\parallel}(z) = \begin{pmatrix}W_1(z) - \frac{u_1}{\|u\|^2}\sum_{k=1}^{K}u_kW_k(z)\\\vdots\\ W_K(z) - \frac{u_K}{\|u\|^2}\sum_{k=1}^{K}u_kW_k(z)\end{pmatrix}.$$
	One can observe that when a class $i$ job routing to server $k$, the incremental change on $W_{\perp}$ is
	\begin{equation}\label{eq:increment}\delta_{ik} = W(z) + m_{ik}e^{(k)}-\big[W_{\parallel}(z) + \langle m_{ik}e^{(k)}, c_u\rangle c_u\big] - [W(z)-W_{\parallel}(z) ] =  m_{ik}e^{(k)}-\langle m_{ik}e^{(k)}, c_u\rangle c_u.\end{equation}
	Furthermore, since $c_u^\top W_\perp(z)=0$,
	\begin{equation}\label{eq:incremx}\delta_{ik}^\top W_{\perp}(z)  = m_{ik}e^{(k)\top}W_{\perp}(z)-\langle m_{ik}e^{(k)},c_u\rangle c_u^\top[W(z)-W_{\parallel}(z)] = m_{ik}e^{(k)}W_{\perp}(z) = m_{ik}W_{\perp,k}(z),
	\end{equation}
	where $W_{\perp,k}(z)$ denotes the $k$th element of vector $W_{\perp}(z)$. Similar observation applies for service completion on server $k$ for a job from class $i$. 
	
	For the $n$th moment state-space collapse, $n\in \mathbb{N}^+$, we set the test function as \begin{equation}\label{eq:ssctest} 	f(z) = \|W_{\perp}(z)\|^{n+1} = \Bigg\{\sum_{k=1}^K\bigg[W_k(z) -\frac{u_k}{\|u\|^2}\sum_{\ell=1}^{K}u_{\ell}W_{\ell}(z)\bigg]^2\Bigg\}^{\frac{n+1}{2}}.\end{equation}
	Then we obtain the following lemma, whose detailed proof will be provided in Appendix \ref{sec:ssc_lm1}. 
	\begin{lemma}\label{lem:ssc1}
		Given the test function set in $(\ref{eq:ssctest})$, we have the following inequalities: 
		$$\begin{aligned}
			&f(z+e_{ik})-f(z) \leq (n+1)m_{ik}W_{\perp,k}(z) \|W_{\perp}(z)\|^{n-1} + \sum_{\ell=0}^{n-1}C_{\ell}^A\|W_{\perp}(z)\|^{\ell}\\
			&f(z-e_{ik})-f(z) \leq -(n+1)m_{ik}W_{\perp,k}(z) \|W_{\perp}(z)\|^{n-1} + \sum_{\ell=0}^{n-1}C_{\ell}^S\|W_{\perp}(z)\|^{\ell}\\
		\end{aligned}$$
		where $C_{\ell}^A$ and $C_{\ell}^S$ denote the constants associated with arrivals(A) and service(S) for each $\ell<n+1, \ell\in \Z$. 
	\end{lemma}
	
	With Lemma \ref{lem:ssc1}, one can upper bound the generator of test function $(\ref{eq:ssctest})$ as follows:
	$$\begin{aligned}
		Gf(z)
		=&\sum_{i=1}^{I}\lambda^{(r)}_i\sum_{k\in \mathcal{K}(i)}\Big[f({z}+{e}_{ik})-f({z})\Big]\mathbbm{1}\left(k=L^{(i)}({z})\right)
		+\sum_{k=1}^{K}\sum_{i\in \mathcal{I}(k)}\mu_{ik}P_{ik}({z})\Big[f({z}-{e}_{ik})-f({z})\Big]\\
		\leq& (n+1)\bigg[\sum_{i=1}^I\lambda_i^{(r)}\sum_{k\in\mathcal{K}(i)}m_{ik}W_{\perp,k}(z)\mathbbm{1}\left(k=L^{(i)}({z})\right) - \sum_{k=1}^K\sum_{i\in\mathcal{I}(k)}P_{ik}({z})W_{\perp,k}(z) \bigg] \|W_{\perp}(z)\|^{n-1} \\
&+\sum_{i=1}^I\lambda_i^{(r)}\sum_{k\in\mathcal{K}(i)}\bigg[\sum_{\ell=0}^{n-1}C_{\ell}^A\|W_{\perp}(z)\|^{\ell}\bigg]\left(k=L^{(i)}({z})\right) + \sum_{k=1}^K\sum_{i\in\mathcal{I}(k)}\bigg[\sum_{\ell=0}^{n-1}C_{\ell}^S\|W_{\perp}(z)\|^{\ell}\bigg]\\
		\leq& (n+1)\bigg[\sum_{i=1}^I\lambda_i\sum_{k\in\mathcal{K}(i)}m_{ik}W_{\perp,k}(z)\mathbbm{1}\left(k=L^{(i)}({z})\right) - \sum_{k=1}^KW_{\perp,k} (z)\bigg] \|W_{\perp}(z)\|^{n-1} \\
		&-r(n+1)\|W_{\perp}(z)\|^{n-1}\sum_{i=1}^I\lambda_i\sum_{k\in\mathcal{K}(i)}m_{ik}W_{\perp,k}(z)\mathbbm{1}\left(k=L^{(i)}({z})\right)+\sum_{\ell=0}^{n-1}C_{\ell}\|W_{\perp}(z)\|^{\ell},
	\end{aligned}$$
	where $C_{\ell} \triangleq C_{\ell}^A\sum_{i=1}^I\lambda_i^{(r)} + I\cdot K\cdot C_{\ell}^S$, and in the last inequality we apply 
	$$\begin{aligned}-\sum_{i\in\mathcal{I}(k)}P_{ik}({z})W_{\perp,k}(z) =-W_{\perp,k}(z) + W_{\perp,k}(z)\mathbbm{1} \bigg(\sum_{i\in
			\mathcal{I}(k)}z_{ik}=0\bigg)\leq  -W_{\perp,k}(z).\end{aligned}$$
	\begin{lemma}\label{lem:ssc2}
		There exists a set $\{\lambda_{ik}\}$ satisfying $\lambda_{i}=\sum_{k\in \mathcal{K}(i)}\lambda_{ik}$ and $\sum_{i\in \mathcal{I}(k)}\lambda_{ik}m_{ik} =1$, such that
		\begin{align}
			&\sum_{i=1}^{I}\lambda_i\sum_{k\in \mathcal{K}(i)}m_{ik}W_{\perp,k}(z)\mathbbm{1}\big(k=L^{(i)}({z})\big)
			-\sum_{k=1}^{K}W_{\perp,k}(z)\leq -\min_{i,k}^b v_i \lambda_{ik} \big[T_{(K)} (z)- T_{(1)}(z)\big];	\label{eq:ub1}\\
			&-r\sum_{i=1}^{I}\lambda_i\sum_{k\in \mathcal{K}(i)}m_{ik}W_{\perp,k}(z)\mathbbm{1}\big(k=L^{(i)}({z})\big)\leq r\max_{i,k}\frac{u_k}{u_k-d_{ik}}\big[T_{(K)} (z)- T_{(1)}(z)\big] \label{eq:ub2}.
		\end{align}
		where the minimum in (\ref{eq:ub1}) is taken only among the basic activities having $\lambda_{ik} >0$.
	\end{lemma}
	The proof of Lemma \ref{lem:ssc2} is given in Appendix \ref{sec:ssc_lm2}.
	
	\begin{proof}[Proof of Proposition $\ref{prop:sscmoment}$]
		With Lemma \ref{lem:ssc2}, the generator of test function $(\ref{eq:ssctest})$ can be further upper bounded:
		\begin{equation}\label{eq:gen_ub2}
			\begin{aligned}
				Gf(z)
				\leq& -(n+1)\left\{\min_{i,k}^b v_i \lambda_{ik} \big[T_{(K)} (z)- T_{(1)}(z)\big]\right\}  \|W_{\perp}\|^{n-1} \\
				&+r(n+1)\left\{\max_{i,k}\frac{u_k}{u_k-d_{ik}}\big[T_{(K)} (z)- T_{(1)}(z)\big]\right\}\|W_{\perp}\|^{n-1}+\sum_{\ell=0}^{n-1}C_{\ell}\|W_{\perp}\|^{\ell}.
		\end{aligned}\end{equation}
		Note that $ \|W_{\perp} \|\leq \sqrt{\sum_{k\in\mathcal{K}}u_k\big(T_{(K)}-T_{(1)}\big)^2} = \big(T_{(K)}-T_{(1)}\big)$, and 
		$$\begin{aligned} \|W_{\perp} \| =&\sqrt{\sum_{k\in\mathcal{K}}u_k\bigg(T_k-\frac{1}{\|u\|^2}\sum_{\ell\in\mathcal{K}}u_{\ell}W_{\ell}\bigg)^2}
			\geq\sqrt{\min_{k\in\mathcal{K}}u_k\sum_{k\in\mathcal{K}}\bigg(T_k-\frac{1}{\|u\|^2}\sum_{\ell\in\mathcal{K}}u_{\ell}W_{\ell}\bigg)^2}\\
			\geq& \sqrt{\min_{k\in\mathcal{K}}u_k}\big(T_{(K)}-T_{(1)}\big),\\   
		\end{aligned}$$
		where the last line is by triangle inequality, (\ref{eq:gen_ub2}) becomes
		\begin{equation}\label{eq:gen_ub3}
			\begin{aligned}
				Gf(z)
				\leq& -(n+1)\sqrt{\min_{k\in\mathcal{K}}u_k}\cdot (\min_{i,k}^b v_i \lambda_{ik}) \cdot\big[T_{(K)} (z)- T_{(1)}(z)\big]^n \\
				&+r(n+1)\big(\max_{i,k}\frac{u_k}{u_k-d_{ik}}\big)\big[T_{(K)} (z)- T_{(1)}(z)\big]^{n}+\sum_{\ell=0}^{n-1}C_{\ell}\big[T_{(K)} (z)- T_{(1)}(z)\big]^{\ell}
		\end{aligned}\end{equation}
		
		In the end, we will apply Lemma \ref{lem:steady_exp} and utilize the induction procedure to conclude our proof. For all $n\in \mathbb{N}^+$, taking expectation on both sides of (\ref{eq:gen_ub3}), by choosing $r_0$ such that $\sqrt{\min_{k\in\mathcal{K}}u_k}(\min_{i,k}^b v_i \lambda_{ik}) - r_0\big(\max_{i,k}\frac{u_k}{u_k-d_{ik}}\big)=0$, for any $r\in(0,r_0)$, one has
		$$\E\bigg[T_{(K)}\big({Z}^{(r)}\big)-T_{(1)}\big({Z}^{(r)}\big)\bigg]^n \leq \frac{\sum_{\ell=0}^{n-1}C_{\ell}\E\big[T_{(K)}\big({Z}^{(r)}\big)-T_{(1)}\big({Z}^{(r)}\big)\big]^{\ell}}{\sqrt{\min_{k\in\mathcal{K}}u_k}(\min_{i,k}^b v_i \lambda_{ik}) - r\big(\max_{i,k}\frac{u_k}{u_k-d_{ik}}\big)}.$$
		Therefore, starting with $n=1$, boundedness of any moment $T_{(K)}\big({Z}^{(r)}\big)-T_{(1)}\big({Z}^{(r)}\big)$ can be proved by induction.
	\end{proof}

	\cite{EryiSrik2012} did not use mathematical induction on $n$. The
	authors proved an exponential version of the state space collapse. They
	are able to achieve this strong result in a (discrete-time)
	join-the-shortest-queue system when the stochastic primitives are assumed
	to have bounded support, an assumption stronger than the usual ``light
	tail'' assumption.  The proof technique in \cite{EryiSrik2012} as well
	as many subsequent papers adopt the drift analysis and results
	developed in \cite{Hajek1982}; see the bounded support assumption in 
	Lemma 1 of  \cite{EryiSrik2012}. Our
	induction proof technique has the potential to prove moment bounds for
	state-space collapse for stochastic systems with only moment
	assumptions on stochastic primitives. See, for example, recent papers
	\cite{DaiGuangXu2024} for the (continuous-time) join-the-shortest-queue
	systems  and \cite{GuanChenDai2023} for the generalized Jackson
	networks.

	\section{Preliminary Results II}\label{sec:outline2}
	In this section and the next section, whenever state-space
	collapse Proposition \ref{prop:sscmoment} is used, it is used for $n=1$ and $2$.
	\begin{lemma}\label{lem:neg}
		Under Assumptions \ref{assu1} \& \ref{assu2}, with  WWTA routing policy  and a non-idling scheduling policy $\mathcal{P}$, for each server $k\in \mathcal{K}$,
		$$\begin{aligned}
			\quad\E\bigg[\bigg(\sum_{k'=1}^Ku_{k'}W_{k'}\Big({Z}^{(r)}\Big)\bigg)\mathbbm{1}\Big(\sum_{i\in \mathcal{I}(k)}Z^{(r)}_{ik}=0\Big)\bigg]=O(r^{1/2}) \quad \text{ as }r\downarrow 0.
		\end{aligned}$$
	\end{lemma}
	\begin{proof}
		$$\begin{aligned}
			&\E\bigg[\bigg(\sum_{k'=1}^Ku_{k'}W_{k'}({Z}^{(r)})\bigg)\mathbbm{1}\bigg(\sum_{i\in \mathcal{I}(k)}Z^{(r)}_{ik}=0\bigg)\bigg]\\
			=&\E\bigg[\bigg|\sum_{k'=1}^Ku_{k'}\sum_{i\in \mathcal{I}(k')}\frac{1}{\mu_{ik'}}Z^{(r)}_{ik'}-\sum_{k'=1}^Ku^2_{k'}\frac{1}{u_k}\sum_{i\in \mathcal{I}(k)}\frac{1}{\mu_{ik}}Z^{(r)}_{ik}\bigg|\mathbbm{1}\bigg(\sum_{i\in \mathcal{I}(k)}Z^{(r)}_{ik}=0\bigg)\bigg]\\
			\leq &\sum_{k'=1}^Ku^2_{k'}\E\bigg[\bigg|\frac{1}{u_{k'}}\sum_{i\in \mathcal{I}(k')}\frac{1}{\mu_{ik'}}Z^{(r)}_{ik'}-\frac{1}{u_k}\sum_{i\in \mathcal{I}(k)}\frac{1}{\mu_{ik}}Z^{(r)}_{ik}\bigg|\mathbbm{1}\bigg(\sum_{i\in \mathcal{I}(k)}Z^{(r)}_{ik}=0\bigg)\bigg]\\
			\stackrel{(a)}{\leq}&\E\bigg[\bigg(\max_{k\in\mathcal{K}}T_{k}({Z}^{(r)})-\min_{k\in\mathcal{K}}T_{k}({Z}^{(r)})\bigg)\mathbbm{1}\bigg(\sum_{i\in \mathcal{I}(k)}Z^{(r)}_{ik}=0\bigg)\bigg]\\
			\stackrel{(b)}{\leq}&\E\bigg[\bigg(\max_{k\in\mathcal{K}}T_{k}({Z}^{(r)})-\min_{k\in\mathcal{K}}T_{k}({Z}^{(r)})\bigg)^2\bigg]^{1/2}\Prob\bigg(\sum_{i\in \mathcal{I}(k)}Z^{(r)}_{ik}=0\bigg)^{1/2}\\
			\stackrel{(c)}{\leq}&M_2^{1/2}\Prob\bigg(\sum_{i\in \mathcal{I}(k)}Z^{(r)}_{ik}=0\bigg)^{1/2}\stackrel{(d)}{=}O(r^{1/2}),\\
		\end{aligned}$$
		where $(a)$ is by Lemma $\ref{lem:pd1}$, $(b)$ is by Cauchy-Schwarz Inequality, $(c)$ is by Proposition $\ref{prop:sscmoment}$, and $(d)$ is by Lemma \ref{lem:idle}.
	\end{proof}

	\begin{lemma}[Negligibility for non-basic activities]{\label{lem:nbcon}}
		Under Assumptions \ref{assu1} \& \ref{assu2}, with the WWTA and scheduling $\mathcal{P}$, for any non-basic activity $(i,k)$ with $d_{ik}>0$, we have
		\begin{equation}\label{eq:nbcon_2}
			\E\left[\bigg(\sum_{k'=1}^Ku_{k'}W_{k'}\Big({Z}^{(r)}\Big)\bigg)\mathbbm{1}\Big(k=L^{(i)}\big({Z}^{(r)}\big)\Big)\right]=O(r^{1/2}),
		\end{equation}
		\begin{equation}\label{eq:nbcon_3}\E\left[P_{ik}\Big({Z}^{(r)}\Big)\bigg(\sum_{k'=1}^Ku_{k'}W_{k'}\Big({Z}^{(r)}\Big)\bigg)\right]=O(r^{1/2}).
		\end{equation}
		Besides, the scaled first moment of sum of non-basic activities is also negligible: 
		\begin{equation}\label{eq:nbcon_4}
			r\E\left[\left(\sum_{i=1}^I\sum_{k\in \mathcal{K}{(i)}}^{nb}v_iZ^{(r)}_{ik}\right)\right]=O(r^{1/2}).
		\end{equation} 
	\end{lemma}
	The proof of Lemma \ref{lem:nbcon} is provided in Appendix \ref{sec:nbcon}.
	
	\begin{lemma}[First moment boundedness]{\label{lem:mbound1}}
		There exists a constant $M>0$ such that,
		$$
		r\E\left[\left(\sum_{i=1}^I\sum_{k\in \mathcal{K}{(i)}}v_iZ^{(r)}_{ik}\right)\right]\leq M + O(r).
		$$
	\end{lemma}
	The proof of Lemma \ref{lem:mbound1} is provided in Appendix \ref{sec:mbound1}.
	
	\section{Proofs for Theorem \ref{thm:main}}\label{sec:mainproof}
	We first introduce Proposition \ref{prop:ld} and Corollary \ref{coro:ld_workload}. Then the presentation of the subsections is planned as follows: we introduce two lemmas in Sections \ref{sec:ld_ingredient1} and \ref{sec:ld_ingredient2} which provide crucial ingredients in proving Proposition \ref{prop:ld}. In Section \ref{sec:ld} we  prove Proposition \ref{prop:ld}, followed by the proof of main result Theorem \ref{thm:main} in Section \ref{sec:mdgwh_c}. 
	
	Define the exponential test function $f_{\theta}({z}): \mathbb{Z}^J_+ \rightarrow \mathbb{R}$:
	$$\label{ef} f_{\theta}(z) = e^{r\theta\left(\sum_{i=1}^I\sum_{k\in \mathcal{K}(i)}v_iz_{ik}\right)}, \quad \theta\leq 0.$$
	Define the Laplace transform $\phi^{(r)}$ as follows:
	\begin{equation}\label{eq:laplace} 
		\begin{aligned}
			& \phi^{(r)}(\theta) = \E\left[f_{\theta}(Z^{(r)})\right],\quad \theta\leq 0,\\
			& \phi^{(r)}_{k}(\theta) = \E\bigg[f_{\theta}(Z^{(r)})\mathbbm{1}\bigg(\sum_{i\in \mathcal{I}(k)}Z^{(r)}_{ik}=0\bigg)\bigg],\quad k\in\mathcal{K},\\
			& \phi^{(r)}_{ik}(\theta) = \E\left[P_{ik}({Z^{(r)}})f_{\theta}(Z^{(r)})\right], \quad i\in\mathcal{I}, k\in\mathcal{K}.
	\end{aligned}\end{equation}
	\begin{proposition}[Limit distribution]{\label{prop:ld}}
		Assume Assumptions \ref{assu1} \& \ref{assu2} hold.  Under the WWTA policy and any non-idling scheduling policy $\mathcal{P}$, for each $\theta\leq 0$, 
		$$	\begin{aligned}
			&\lim\limits_{r\downarrow 0}\phi^{(r)}(\theta) 
			=\frac{1}{1-\theta\sum_{i=1}^{I}\lambda_iv_i^2},\\
		\end{aligned}$$
		that is, the limit is the Laplace transform of an exponential random variable with mean 
		$m = \sum_{i=1}^{I}\lambda_iv_i^2.$
		Therefore, if we denote by $\tilde{X}\sim\exp(1/m)$ an exponential random variable with mean $m$, the scaled sum of queue length, weighted by optimal dual solution, converges in distribution to $\tilde{X}$:   $$r\bigg(\sum_{i=1}^I\sum_{k\in \mathcal{K}(i)}v_iZ^{(r)}_{ik}\bigg)\stackrel{d}{\rightarrow} \tilde{X}\sim \exp(1/m),\quad \text{ as }r \downarrow 0.$$
	\end{proposition}
	\begin{corollary}[Workload Version of Limit Distribution]\label{coro:ld_workload}
		Under the same conditions of Proposition \ref{prop:ld}, the scaled sum of workload, weighted by optimal dual solution, converges in distribution to the random variable $\tilde{X}$, i.e.
		$$r\bigg(\sum_{k=1}^{K}u_kW_k\Big({Z}^{(r)} \Big)\bigg)\stackrel{d}{\rightarrow} \tilde{X},\quad \text{ as }r \downarrow 0$$
		where $\tilde{X}\sim \exp(1/m)$.
	\end{corollary}
	
	\begin{remark}
		The equivalence between Corollary $\ref{coro:ld_workload}$ and Proposition \ref{prop:ld} is nontrivial. It is straightforward if all the activities are basic or have $d_{ik}=0$ as in $(\ref{eq:dik})$.
		However, if there exists non-basic activities with $d_{ik}>0$, then
		\begin{equation}\label{eq:relation}
			\sum_{i=1}^I\sum_{k\in \mathcal{K}(i)}v_iZ^{(r)}_{ik}=\sum_{k=1}^K\sum_{i\in \mathcal{I}(k)}\frac{u_k-d_{ik}}{\mu_{ik}}Z^{(r)}_{ik}=\sum_{k=1}^Ku_kW_k\Big({Z}^{(r)}\Big)-\sum_{k=1}^K\sum_{i\in \mathcal{I}(k)}\frac{d_{ik}}{\mu_{ik}}Z^{(r)}_{ik},
		\end{equation}
		where the additional term due to non-basic activities ($d_{ik}>0$) needs to be handled further. 
	\end{remark}
	The proof of Corollary \ref{coro:ld_workload} can be found in Appendix \ref{sec:ld_workload}.

	\subsection{Ingredient I for Proposition \ref{prop:ld}}\label{sec:ld_ingredient1}
	
	\begin{lemma}\label{lem:ld_ingredient1}
		Under the condition of Proposition $\ref{prop:ld}$, $\forall \theta\leq 0$,
		$$\begin{aligned}
			&\sum_{k=1}^{K}u_k\phi^{(r)}_{k}(\theta)+\sum_{k=1}^{K}\sum_{i\in \mathcal{I}(k)}d_{ik}\phi^{(r)}_{ik}(\theta)\\
			=&\sum_{k=1}^{K}u_k\Prob\left(\sum_{i\in \mathcal{I}(k)}Z^{(r)}_{ik}=0\right)+\sum_{k=1}^{K}\sum_{i\in \mathcal{I}(k)}d_{ik}\E\bigg[P_{ik}\Big({Z^{(r)}}\Big)\bigg]+O(r^{3/2}),		
		\end{aligned}
		$$
		where $d_{ik}$ comes from $(\ref{eq:dik})$. Furthermore, by Lemma \ref{lem:idle},
		$$\sum_{k=1}^{K}u_k\phi^{(r)}_{k}(\theta)+\sum_{k=1}^{K}\sum_{i\in \mathcal{I}(k)}d_{ik}\phi^{(r)}_{ik}(\theta)=r+O(r^{3/2}).$$
	\end{lemma}

	\begin{proof}
		$$\begin{aligned}
			&\sum_{k=1}^{K}u_k\E\left[\left(1-f_{\theta}(Z^{(r)})\right)\mathbbm{1}\left(\sum_{i\in \mathcal{I}(k)}Z^{(r)}_{ik}=0\right)\right]
			+\sum_{k=1}^{K}\sum_{i\in \mathcal{I}(k)}d_{ik}\E\left[P_{ik}({Z^{(r)}})\left(1-f_{\theta}(Z^{(r)})\right)\right]\\
			\stackrel{(a)}{\leq}&r|\theta|\sum_{k=1}^{K}u_k\E\left[\left(\sum_{k=1}^Ku_kW_k({Z}^{(r)})\right)\mathbbm{1}\left(\sum_{i\in \mathcal{I}(k)}Z^{(r)}_{ik}=0\right)\right]\\
			&+r|\theta|\sum_{k=1}^{K}\sum_{i\in \mathcal{I}(k)}d_{ik}\E\left[P_{ik}({Z^{(r)}})\left(\sum_{k=1}^Ku_kW_k({Z}^{(r)})\right)\right]\\
			\stackrel{(b)}{\leq}& O(r^{3/2})
		\end{aligned}$$
		where $(a)$ follows from the inequality
		$1-e^{-x}\leq x$ for $  x\geq 0$ and $(\ref{eq:relation})$, $(b)$ is by Lemma $\ref{lem:neg}$ and Lemma $\ref{lem:nbcon}$, where either $d_{ik}=0$ or $d_{ik}>0$ for non-basic activities.
	\end{proof}
	
	\subsection{Ingredient II for Proposition \ref{prop:ld}}\label{sec:ld_ingredient2}
	\begin{lemma}\label{ssc2}
		Under the condition of Proposition $\ref{prop:ld}$, for each $i\in\mathcal{I}$, $\forall \theta\leq 0$, 
		\begin{equation}\label{eq:ssc2_1}\begin{aligned}\lambda_{i}v_{i}\lim_{r\downarrow 0}&\phi^{(r)}(\theta)=\sum_{k\in \mathcal{K}(i)}u_k\lim_{r\downarrow 0}\phi^{(r)}_{ik}(\theta)
		\end{aligned}\end{equation}
	\end{lemma}
	\begin{remark}
		Equivalently, Lemma \ref{ssc2} reflects the flow balance:
		\begin{equation}\label{eq:ssc2_2} \begin{aligned}\lambda_{i}\lim_{r\downarrow 0}&\phi^{(r)}(\theta)=\sum_{k\in \mathcal{K}(i)}\mu_{ik}\lim_{r\downarrow 0}\phi^{(r)}_{ik}(\theta)
		\end{aligned}\end{equation}
		The equivalence of $(\ref{eq:ssc2_1})$ and $(\ref{eq:ssc2_2})$ is straightforward because of $(\ref{eq:dik})$ and Lemma \ref{lem:idle} that
		$$
		d_{ik}\phi^{(r)}_{ik}(\theta)\leq d_{ik}\E\left[P_{ik}\big({Z}^{(r)}\big)\right]=O(r)
		$$
	\end{remark}

	\begin{proof}[Proof of Lemma \ref{ssc2}]
		In the following proof, (i) will be the trivial case with $\theta=0$. In case (ii) with $\theta<0$, we will first show the following 
		\begin{equation}\label{eq:helper}	\begin{aligned} \lim_{r\downarrow 0}&\E\Bigg[\bigg(\lambda_{i}v_{i}
				-\sum_{k\in \mathcal{K}(i)}u_kP_{ik}({Z}^{(r)})\bigg)f_{\theta}(Z^{(r)})e^{r\theta\left(t\sum_{k\in \mathcal{K}(i)}v_{i}Z^{(r)}_{ik}\right)}\Bigg]=0
		\end{aligned}\end{equation}
		is true for any $t>0$, then we use Moore-Osgood Theorem(\cite{Lawr1946}) to perform the interchange of limits to prove the case holds with $t=0$ by taking $t\downarrow 0$, which is $(\ref{eq:ssc2_1})$ that we intend to prove. 
		\begin{enumerate}
			\item[(i)] Trivial case: $\theta=0$. 
			We let $f({z})=\sum_{k\in \mathcal{K}(i)}z_{ik}$, $\forall i \in I$. With Lemma \ref{lem:steady_exp}, the generator (\ref{eq:generator}) becomes
			$$
			\begin{aligned}
				\sum_{k\in \mathcal{K}(i)}\mu_{ik}\E\left[P_{ik}({Z^{(r)}})\right]=\lambda^{(r)}_i.
			\end{aligned}$$
			Taking limit and multiply $v_i$ on both sides, by $(\ref{eq:dik})$, we have
			$$\begin{aligned}\lambda_{i}v_{i}
				=\lim_{r\downarrow 0}\sum_{k\in \mathcal{K}(i)}(u_k-d_{ik})\E\left[P_{ik}({Z}^{(r)})\right]\stackrel{(a)}{=}\lim_{r\downarrow 0}\sum_{k\in \mathcal{K}(i)}u_k\E\left[P_{ik}({Z}^{(r)})\right], 
			\end{aligned}$$
			where $(a)$ is by Lemma \ref{lem:idle}.
			\item[(ii)] When $\theta<0$, we let the test function be $\tilde{f}_{\theta}(z)=e^{r\theta\left(\sum_{i=1}^I\sum_{k\in \mathcal{K}(i)}c_iz_{ik}\right)}, c_i\geq 0$, then the generator (\ref{eq:generator}) becomes
			$$\begin{aligned}
				G\tilde{f}_{\theta}({z})
				=&\sum_{i=1}^{I}\lambda^{(r)}_i\left(e^{c_ir\theta}-1\right)\tilde{f}_{\theta}(z)
				+\sum_{k=1}^{K}\sum_{i\in \mathcal{I}(k)}\frac{u_k-d_{ik}}{v_i}P_{ik}({z})\left(e^{-c_ir\theta}-1\right)\tilde{f}_{\theta}(z).\\
			\end{aligned}$$
			By applying Lemma \ref{lem:steady_exp}, with second order Taylor expansion and $0\leq \tilde{f}_{\theta}(z)\leq 1$, we have
			\begin{equation}\label{eq:equiv1}
				\begin{aligned}
					&\sum_{i=1}^{I}\lambda_ic_ir\theta \tilde{\phi}^{(r)}(\theta)
					+\sum_{k=1}^{K}\sum_{i\in \mathcal{I}(k)}\frac{u_k-d_{ik}}{v_i}(-c_ir\theta)\tilde{\phi}_{ik}^{(r)}(\theta)
					-\sum_{i=1}^{I}\lambda_ic_ir^2\theta \tilde{\phi}^{(r)}(\theta)\\
					+&\frac{1}{2}\sum_{i=1}^{I}\lambda_ic^2_ir^2\theta^2 \tilde{\phi}^{(r)}(\theta)
					+\frac{1}{2}\sum_{k=1}^{K}\sum_{i\in \mathcal{I}(k)}\frac{u_k-d_{ik}}{v_i}(c^2_ir^2\theta^2)\tilde{\phi}_{ik}^{(r)}(\theta)=O(r^3),
				\end{aligned}
			\end{equation}
			where $\tilde{\phi}$, $\tilde{\phi}_{ik}$ are defined similarly as $(\ref{eq:laplace})$ by replacing $f$ by $\tilde{f}$ and we apply $\lambda_i^{(r)} = \lambda_i(1-r)$.
			At this moment, we only focus on the first order terms w.r.t $r$, then $(\ref{eq:equiv1})$ can be rewritten as
			\begin{equation}\label{eq:equiv2}
				\begin{aligned}
					&\sum_{i=1}^{I}\lambda_ic_ir\theta \tilde{\phi}^{(r)}(\theta)
					+\sum_{k=1}^{K}\sum_{i\in \mathcal{I}(k)}\frac{u_k-d_{ik}}{v_i}(-c_ir\theta)\tilde{\phi}_{ik}^{(r)}(\theta)=O(r^2).\\
			\end{aligned}\end{equation}
			Now for each fixed $i'\in\mathcal{I}$, we set $$\begin{aligned}
				&c_{i'}=v_{i'}(1+t),\quad t\geq 0,\\
				&c_i=v_i, \quad \forall i\in\mathcal{I}, i\neq i'.\end{aligned}$$
			Then $(\ref{eq:equiv2})$ becomes
			$$ \begin{aligned}
				&\lambda_{i'}v_{i'} t\tilde{\phi}^{(r)}(\theta)+\sum_{i=1}^{I}\lambda_iv_i\tilde{\phi}^{(r)}(\theta)
				-\sum_{k\in\mathcal{K}(i')}(u_k-d_{ik})t\tilde{\phi}_{i'k}^{(r)}(\theta)\\
				&-\sum_{k=1}^{K}u_k\tilde{\phi}^{(r)}(\theta)+\sum_{k=1}^{K}u_k\tilde{\phi}_{k}^{(r)}(\theta)+\sum_{k=1}^{K}\sum_{i\in \mathcal{I}(k)}d_{ik}\tilde{\phi}_{ik}^{(r)}(\theta)=O(r).\end{aligned}$$
			We then further have
			\begin{equation}\label{eq:equiv3}\lambda_{i'}v_{i'} t\tilde{\phi}^{(r)}(\theta)-\sum_{k\in \mathcal{K}(i')}u_kt\tilde{\phi}_{i'k}^{(r)}(\theta)=O(r),\end{equation}
			by applying Lemma $\ref{lem:pd1}$ and the following via Lemma \ref{lem:idle}:
			$$\begin{aligned}
				&\tilde{\phi}_{k}^{(r)}(\theta)\leq \Prob \bigg(\sum_{i\in \mathcal{I}(k)}Z^{(r)}_{ik}=0\bigg)=O(r),\\
				&\sum_{k=1}^{K}\sum_{i\in \mathcal{I}(k)}d_{ik}\tilde{\phi}_{ik}^{(r)}(\theta)
				\leq \sum_{k=1}^{K}\sum_{i\in \mathcal{I}(k)}d_{ik}\E\left[P_{ik}({Z}^{(r)})\right]=O(r).
			\end{aligned}$$
			To write $(\ref{eq:equiv3})$ explicitly, one has
			\begin{equation}\label{eq:equiv4}
				\begin{aligned}
					&\lambda_{i'}v_{i'} t\E\left[e^{r\theta\left(t\sum_{k\in \mathcal{K}(i')}v_{i'}Z^{(r)}_{i'k}\right)}f_{\theta}(Z^{(r)})\right]\\
					&-\sum_{k\in \mathcal{K}(i')}u_kt\E\left[P_{i'k}({Z}^{(r)})e^{r\theta\left(t\sum_{k\in \mathcal{K}(i')}v_{i'}Z^{(r)}_{i'k}\right)}f_{\theta}(Z^{(r)})\right]
					=O(r).\\
				\end{aligned}
			\end{equation}
			There are two cases w.r.t $t\geq0$ in $(\ref{eq:equiv4})$: 
			\begin{enumerate}
				\item 
				If $t> 0$, divide $t$ on both sides of $(\ref{eq:equiv4})$, and take $r\downarrow 0$. Then we have (\ref{eq:helper}) for $\forall t >0$ and each $ i\in\mathcal{I}$.
				\item
				When $t=0$, we need to prove the following lemma:
				\begin{lemma}\label{lem:interchg}
					Suppose $(\ref{eq:helper})$ holds for $t>0$, then it also holds when $t=0$. 
				\end{lemma}
				We put the proof of Lemma \ref{lem:interchg} in Appendix $\ref{sec:interchg}$,  where we use Moore-Osgood Theorem(\cite{Lawr1946}) by taking $t\downarrow 0$ to perform the interchange of limits.
				
			\end{enumerate}
		\end{enumerate}
	\end{proof}

	\subsection{Proof of Proposition \ref{prop:ld}}\label{sec:ld}
	\begin{proof}
		Let $f_{\theta}(z) = e^{r\theta\left(\sum_{i=1}^I\sum_{k\in \mathcal{K}(i)}v_iz_{ik}\right)}, \theta\leq 0$. Using a second-order Taylor expansion with $0\leq f({z})\leq 1$, and applying Lemma \ref{lem:steady_exp} to $f_{\theta}(z)$, we obtain the following BAR after dividing $r\theta$ on both sides: 
		$$
		\begin{aligned}
			&\sum_{k=1}^{K}u_k\phi_{k}^{(r)}(\theta)+\sum_{k=1}^{K}\sum_{i\in \mathcal{I}(k)}d_{ik}\phi_{ik}^{(r)}(\theta)
			-\sum_{i=1}^{I}\lambda_iv_ir \phi^{(r)}(\theta)\\
			&+\frac{1}{2}\sum_{i=1}^{I}\lambda_iv^2_ir\theta \phi^{(r)}(\theta)
			+\frac{1}{2}\sum_{k=1}^{K}\sum_{i\in \mathcal{I}(k)}u_kv_ir\theta\phi_{ik}^{(r)}(\theta)+O(r^2)=0.\\
		\end{aligned}
		$$
		Plugging in the result of Lemma $\ref{lem:ld_ingredient1}$, the BAR becomes
		$$
		\begin{aligned}
			&r+O(r^{3/2})-\sum_{i=1}^{I}\lambda_iv_ir \phi^{(r)}(\theta)+\frac{1}{2}\sum_{i=1}^{I}\lambda_iv^2_ir\theta \phi^{(r)}(\theta)
			+\frac{1}{2}\sum_{k=1}^{K}\sum_{i\in \mathcal{I}(k)}u_kv_ir\theta \phi_{ik}^{(r)}(\theta)
			=0.
		\end{aligned}
		$$
		By  $\sum_{i=1}^{I}\lambda_iv_i =1$ in Lemma \ref{lem:pd1}, and rearranging some terms, one has
		$$
		\begin{aligned}
			&r
			-r \phi^{(r)}(\theta)+\sum_{i=1}^{I}\lambda_iv^2_ir\theta \phi^{(r)}(\theta)+\frac{1}{2}\sum_{i=1}^{I}v_ir\theta \left(\sum_{k\in \mathcal{K}(i)}u_k\phi^{(r)}_{ik}(\theta)-\lambda_{i}v_{i}\phi^{(r)}(\theta)\right)=O(r^{3/2}).
		\end{aligned}
		$$
		Dividing $r$ on both sides above, taking $r\downarrow 0$, and plugging in the result of Lemma $\ref{ssc2}$ makes the last term on the LHS goes to 0. Therefore, one has
		$$\begin{aligned}
			&(1-\theta\sum_{i=1}^{I}\lambda_iv^2_i)\lim_{r\downarrow 0} \phi^{(r)}(\theta)=1.\\
		\end{aligned}
		$$
		The proof of Proposition \ref{prop:ld} is therefore complete.
	\end{proof}
	
	\subsection{Proof of Theorem \ref{thm:main}}\label{sec:mdgwh_c}
	\begin{proof} 
		Denote 
		$$\tilde{X}^{(r)}\triangleq r \bigg(\sum_{k=1}^Ku_kW_k({Z}^{(r)})\bigg)=r\bigg(\sum_{k=1}^Ku^2_kT_k({Z}^{(r)})\bigg),$$
		then as shown by Corollary $\ref{coro:ld_workload}$, for $e=(1,\ldots,1)^\top$, we have
		\begin{equation}{\label{19}}
			\tilde{X}^{(r)}e \stackrel{d}{\rightarrow} \tilde{X}e,\quad as~r \downarrow 0.
		\end{equation}
		Now denote $$Y_{k'}^{(r)}\triangleq r\sum_{k=1}^{K}\frac{u^2_k}{u_{k'}}W_{k'}({Z}^{(r)})=r\sum_{k=1}^{K}u_k^2T_{k'}({Z}^{(r)}),\quad k'\in\mathcal{K},$$
		and $Y^{(r)}=(Y_{1}^{(r)},\ldots,Y_{K}^{(r)})^\top$, then one has
		$$\begin{aligned}
			&\Big\|\tilde{X}^{(r)}e-Y^{(r)}\Big\|\leq\sum_{k'=1}^K \left|\tilde{X}^{(r)}-Y_{k'}^{(r)}\right|
			=\sum_{k'=1}^K r\left[\sum_{k=1}^{K}u_k^2\left|T_{k}({Z}^{(r)})-T_{k'}({Z}^{(r)})\right|\right]\\
			\leq& \sum_{k'=1}^K r\sum_{k=1}^{K}u_k^2\left(\max_{k\in\mathcal{K}}T_k({Z}^{(r)})-\min_{k\in\mathcal{K}}T_k({Z}^{(r)})\right) \leq r K \left(\max_{k\in\mathcal{K}}T_k({Z}^{(r)})-\min_{k\in\mathcal{K}}T_k({Z}^{(r)})\right),\\
		\end{aligned}
		$$
		where in the last inequality we utilize $\sum_{k=1}^{K}u_k^2<1$ (since $u_k>0$ and $\sum_k u_k=1$ by Lemmas~\ref{lem:pd1} and \ref{lem:pd2}). By Proposition $\ref{prop:sscmoment}$, it then follows from Markov Inequality that
		\begin{equation}{\label{20}}\begin{aligned}
				&\left\|\tilde{X}^{(r)}e-Y^{(r)}\right\|\stackrel{p}{\rightarrow} 0, \quad as~ r \downarrow 0.
			\end{aligned}
		\end{equation}
		Combining $(\ref{19})$ with $(\ref{20})$, by \cite{Bill1999} (Theorem 3.1), one has
		$$Y^{(r)}\stackrel{d}{\rightarrow }\tilde{X}e, \quad as~ r \downarrow 0,$$
		i.e. 
		$$r\left(\sum_{k=1}^{K}u^2_k\right)\left(\frac{1}{u_{1}}W_{1}({Z}^{(r)}),\ldots,\frac{1}{u_{K}}W_{K}({Z}^{(r)})\right)^\top\stackrel{d}{\rightarrow }\tilde{X}e,$$
		where $\tilde{X}\sim \exp(1/m)= \exp\left(\frac{1}{\sum_{i=1}^{I}\lambda_iv_i^2}\right)$.
		Then by scaling property of exponential distribution, one has
		$$\begin{aligned}
			\lim\limits_{r\downarrow 0}r \left(W_1({Z}^{(r)}),\ldots,W_K({Z}^{(r)})\right) \stackrel{d}{\rightarrow}\left(u_1,\ldots,u_K\right) X,
		\end{aligned}$$
		where ${X} \sim \exp\left(\frac{\sum_{k=1}^{K}u_k^2}{\sum_{i=1}^{I}\lambda_iv_i^2}\right)$.
	\end{proof}

	\setlength{\bibsep}{0.85pt}{
		\bibliographystyle{ims}
		\bibliography{dai20240229}
	}
	
	\appendix
	\begin{appendix}
		\addtocontents{toc}{\protect\setcounter{tocdepth}{1}}
		\makeatletter
		\addtocontents{toc}{%
			\begingroup
			\let\protect\l@chapter\protect\l@section
			\let\protect\l@section\protect\l@subsection
		}
		\makeatother
		
		\section{Simulation on X model}\label{sec:xmodel}
		We simulate the X model under two architectures shown in Figures \ref{fig:x_arch1} and \ref{fig:x_arch2}. In X model simulation, we set the same parameter as N model introduced in Figure~\ref{fig:arch1}, except a new activity $(2,1)$ is added with $\mu_4 = 1$ to obtain the X topology. That is
		$$\lambda_1 = 1.3\rho, \quad\lambda_2 = 0.4\rho, \quad\mu_1 =\mu_3=\mu_4 = 1, \quad\mu_2 = 0.5. $$
		For policies using ``priority", we employ the shortest-mean-processing-first  static buffer priority rule: class~2 jobs are served before class~1 whenever both are available. Each setting of X model is run under 95\% system load (i.e., $\rho=0.95$). In Figure \ref{fig:xcomparison}, JSQ with priority scheduling policy under Architecture 1 turns out to be unstable. Likewise, using only priority scheduling under Architecture~2 (with no routing policy) is unstable. By contrast, both (i) WWTA with priority scheduling under Architecture~1 and (ii) MaxWeight scheduling under Architecture~2 are stable.
		
		\begin{figure}
			\begin{minipage}[t]{0.5\linewidth}
				\centering
				\begin{tikzpicture}[inner sep=1.5mm]
					\node[server,minimum size=0.45in] at (6,4) (S1)  {$S_1$};
					\node[buffer] (B1) [above=0.2in of S1.north west] {$B_{1}$};
					\node[server,minimum size=0.45in] (S2) [right=0.8in of S1] {$S_2$};
					\node[vbuffer] (B2) [above=0.2in of S1.north east] {$B_{4}$};
					\node[buffer] (B3) [above=0.2in of S2.north west] {$B_{2}$};
					\node[buffer] (B4) [above=0.2in of S2.north east] {$B_{3}$};
					\node[routing] (R1) [above=0.4in of B1.north] {$R_1$};
					\node[routing] (R2) [above=0.4in of B4.north] {$R_2$};
					\coordinate (source1) at ($ (B1.north) + (0in, 0.8in)$) {};
					\coordinate (source2) at ($ (B4.north) + (0in, 0.8in)$) {};
					\coordinate (departure1) at ($ (S1.south) + (0in, -.2in)$) {};
					\coordinate (departure2) at ($ (S2.south) + (0in, -.2in)$) {};

					\draw[thick,->] (source1) -| (R1);
					\draw[thick,->] (source2) -| (R2);
					\draw[thick,->] (R1.south) -| (B1);
					\draw[thick,->] (R1.south) -- ++(B3);
					\draw[thick,->, dashed] (R2.south) -- ++(B2);
					\draw[thick,->] (R2.south) -| (B4);	
					\draw[thick,->] (S1.south) -| (departure1);
					\draw[thick,->] (S2.south) -| (departure2);
					\draw[thick,->] (B1.south) -| (S1.north west);
					\draw[thick,->] (B2.south) -| (S1.north east);
					\draw[thick,->] (B3.south) -| (S2.north west);
					\draw[thick,->] (B4.south) -| (S2.north east);

					\path[thick]
					;

					\node [below ,align=left] at ($(departure1)+(0in,0)$) {departure 1};
					\node [below,align=left] at ($(departure2)+(0in,0)$) {departure 2};
					\node [above,align=left] at (source1.south) {class 1\\ arrivals};
					\node [above,align=left] at (source2.south) {class 2\\ arrivals};
					
					arrival parameter
					\node [left,align=left] at ($(source1)+(0in,-.1in)$) {\textbf{$\lambda_1$}};
					\node [left,align=left] at ($(source2)+(0in,-.1in)$) {\textbf{$\lambda_2$}};
					
					mean service time
					\node [left,align=left] at ($(B1)+(0in,-.3in)$) {\textbf{$\mu_1$}};
					\node [left,align=left] at ($(B2)+(0in,-.3in)$) {\textbf{$\mu_4$}};
					\node [right,align=left] at ($(B3)+(0in,-.3in)$) {\textbf{$\mu_2$}};	
					\node [right,align=left] at ($(B4)+(0in,-.3in)$) {\textbf{$\mu_3$}};	
				\end{tikzpicture}
				\caption{(X model) Architecture 1.}
				\label{fig:x_arch1}
			\end{minipage}
			\begin{minipage}[t]{0.5\linewidth}
				\centering
				\begin{tikzpicture}[inner sep=1.5mm]
					\node[server,minimum size=0.45in] at (6,4) (S1)  {$S_1$};
					\node[buffer] (B1) [above=0.6in of S1.north] {$B_{1}$};
					\node[server,minimum size=0.45in] (S2) [right=0.8in of S1] {$S_2$};
					\node[buffer] (B2) [above=0.6in of S2.north] {$B_{2}$};
					\coordinate (source1) at ($ (B1.north) + (0in, .4in)$) {};
					\coordinate (source2) at ($ (B2.north) + (0in, .4in)$) {};
					\coordinate (departure1) at ($ (S1.south) + (0in, -.2in)$) {};
					\coordinate (departure2) at ($ (S2.south) + (0in, -.2in)$) {};
					
					\draw[thick,->] (source1) -| (B1);
					\draw[thick,->] (source2) -| (B2);
					
					\draw[thick,->] (S1.south) -| (departure1);
					\draw[thick,->] (S2.south) -| (departure2);
					\draw[thick,->] (B1.south) -- ++ (S1);
					\draw[thick,->] (B1.south) -- ++ (S2);
					\draw[thick,->] (B2.south) -- ++ (S2);
					\draw[thick,->, dashed] (B2.south) -- ++(S1);
					\path[thick]
					;
					
					\node [below ,align=left] at ($(departure1)+(0in,0)$) {departure 1};
					\node [below,align=left] at ($(departure2)+(0in,0)$) {departure 2};
					\node [above,align=left] at (source1.south) {class 1\\ arrivals};
					\node [above,align=left] at (source2.south) {class 2\\ arrivals};
					
					\node [left,align=left] at ($(source1)+(0.02in,-.2in)$) {\textbf{$\lambda_1$}};
					\node [left,align=left] at ($(source2)+(0.02in,-.2in)$) {\textbf{$\lambda_2$}};
					
					\node [left,align=left] at ($(B1)+(-.0in,-.6in)$) {\textbf{$\mu_{1}$}};
					\node [left,align=left] at ($(B2)+(-.2in,-0.7in)$) {\textbf{$\mu_{2}$}};
					\node [left,align=left] at ($(B2)+(.45in,-.6in)$) {\textbf{$\mu_{3}$}};
					\node [left,align=left] at ($(S1)+(0.5in,.1in)$) {\textbf{$\mu_4$}};
				\end{tikzpicture}
				\caption{(X model) Architecture 2.}
				\label{fig:x_arch2}
			\end{minipage}
			
		\end{figure}	
		
		\begin{figure}[H]
			\centering
			\includegraphics[width=0.9\linewidth]{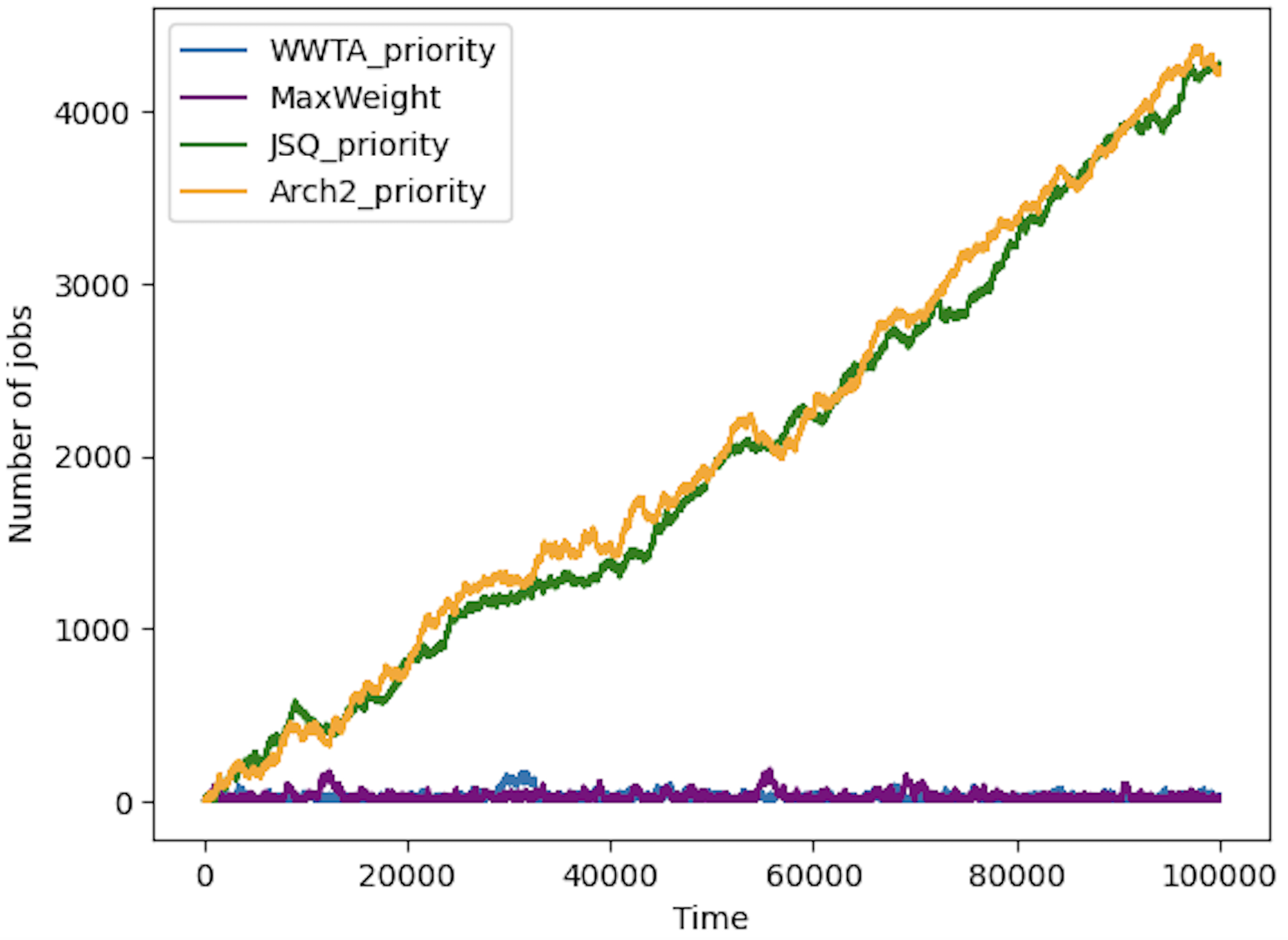}
			\caption{Stability comparison under 95\% load.}
			\label{fig:xcomparison}
		\end{figure}

		\section{Proofs in Section \ref{sec:main} and Section \ref{sec:outline1}}
		\subsection{Proof of Lemma $\ref{lem:pd1}$}\label{sec:pd1_p} 
		
		\begin{proof}
			Under Assumption \ref{assu1}, by strong duality, the dual LP also has optimal solution $({v}^*,{u}^*)$, and the duality gap is zero. Therefore
			$$\sum_{i=1}^{I}\lambda_iv^*_i=\rho^*=1.$$
			For the second constrain in dual LP,  complementary slackness gives
			$$\sum_{k=1}^{K}u^*_k =1.$$
			Furthermore, complementary slackness also gives, for each $k\in\mathcal{K}$ and $j\in\mathcal{J}$, 
			$$({A}{x}^*)_k = 
			1 \quad or \quad u^*_k = 0,$$
			$$x^*_j =0(x^*_{ik}=0) \quad or\quad (vR)_j= (uA)_j,$$
			where
			$(vR)_j=\mu_{ik}v^*_i,~ (uA)_j=u^*_k$.
		\end{proof}	
		
		\subsection{Proof of Lemma $\ref{lem:pd2}$}\label{sec:pd2_p}
		\begin{proof} 
			Starting with any one of the servers $k$, denote $u^*_{k} = a\geq 0$, and Assumption $\ref{assu2}$ guarantees that we can find at least one communicating server for it. Here suppose server $k$ has two communicating servers $k_1, k_2$, as illustration, that communicate directly with server $k$ through some classes $i_1, i_2$ via basic activities:
			$$\begin{aligned}
				server~k~~&\stackrel{class~i_1}{\rightarrow}& server~k_1\\
				\downarrow _{class~ i_2}&&\\
				server~k_2&&\\
			\end{aligned}$$
			In other words, server $k$ communicates with $k_1$ via basic activities $(i_1, k)$ and $(i_1, k_1)$, $k$ communicates with $k_2$ via basic activities $(i_2, k)$, $(i_2, k_2)$.
			
			By Lemma~\ref{lem:pd1}(iii), each basic activity $x^*_{ik}>0$ implies $\mu_{ik}v^*_i=u^*_k$. Hence, we have
			$$v^*_{i_1}=\frac{u^*_{k}}{\mu_{i_1{k}}}=\frac{a}{\mu_{i_1{k}}},\quad v^*_{i_2}=\frac{u^*_{k}}{\mu_{i_2{k}}}=\frac{a}{\mu_{i_2{k}}}; $$ 
			$$\begin{aligned}
				&u^*_{k_1} = v^*_{i_1}\mu_{i_1k_1} = a\frac{\mu_{i_1k_1}}{\mu_{i_1{k}}},\quad u^*_{k_2} = v^*_{i_2}\mu_{i_2k_2}=a\frac{\mu_{i_2k_2}}{\mu_{i_2{k}}}.
			\end{aligned}$$
			That means $u^*_{k_1}$, $u^*_{k_2}$ for server ${k_1}$ and ${k_2}$ can be expressed as $u^*_{k}=a$ multiplied by the ratio of some mean service rates.
			
			Similarly, starting with servers ${k_1}$ and ${k_2}$, we can also find other communicating servers, respectively, via the basic activities to obtain $u^*_{k_3}, u^*_{k_4}\ldots$, and derive them as $u^*_{k}=a$ multiplied by ratios of some mean service rates. Continually, by Assumption $\ref{assu2}$, we will go over all the servers and obtain such expression for each server. As the last step, we can solve $u^*_{k}=a$ uniquely by Lemma $\ref{lem:pd1}$: $\sum_{k=1}^Ku^*_k=1$. Then tracing back each element in $(v, u)$ can also be solved explicitly. 
			
			In the discussion above, we pick one possible $i_k$ in each step to obtain the solution $({v}^*,{u}^*)$.  Each choice of $i_k$ might not be unique due to the multiple choices of basic activities for communication. Therefore, it is likely that there are some unused equations due to unused basic activities. While since Lemma $\ref{lem:pd1}$ already guarantees the existence of optimal dual solution, this $({v}^*,{u}^*)$ should satisfy the unused equations, otherwise it is not an optimal solution. Therefore, the optimal dual solution is unique.
			
			For proving Lemma $\ref{lem:pd2}$(ii), it is obvious that $a>0$, then from the equations utilized above,  $u^*_k>0$,  $v^*_i>0$, $\forall k \in \mathcal{K}, \forall i \in I$. 
		\end{proof}	
		
		\subsection{Proof of Lemma $\ref{lem:steady_exp}$}\label{sec:steady} \begin{proof} Assume $f({z}): \mathbb{Z}^J_+ \rightarrow \mathbb{R}$ satisfy $|f({z})| \leq C\sum_{k=1}^{K}W^{n}_k(z)$ for some $C>0$. As discussed in Lemma 1 of \cite{BravDaiFeng2016}, a sufficient condition to ensure $$	\E\bigg[Gf\Big({Z}^{(r)}\Big)\bigg]=0$$
			is given by \cite{GlynZeev2008,Hend1997}, which requires
			$$
			\mathbb{E}\bigg[\bigg|G\left({Z}^{(r)}, {Z}^{(r)}\right) f\Big({Z}^{(r)}\Big)\bigg|\bigg]<\infty,
			$$
			where $G({z},{z})$ is the diagonal element of the generator matrix G corresponding to state ${z}$. First, we have
			$$\begin{aligned}
				&\bigg|G\left({Z}^{(r)}, {Z}^{(r)}\right)\bigg| \\
				=& \bigg|-\left(\sum_{i=1}^{I}\lambda_i^{(r)}\sum_{k\in \mathcal{K}{(i)}}\mathbbm{1}\left(k=L^{(i)}({Z}^{(r)}\right)+\sum_{k=1}^{K}\sum_{i\in \mathcal{I}(k)}\mu_{ik}P_{ik}\left({Z}^{(r)}\right)\right)\bigg|\\
				&\leq\sum_{i=1}^{I}\lambda_i+ \sum_{k=1}^{K}\sum_{i\in \mathcal{I}(k)}\mu_{ik}.
			\end{aligned}$$
			Therefore, by assumption, it is sufficient to show that $\sum_{k=1}^{K}\mathbb{E}\Big[W_k^{n}\Big(Z^{(r)}\Big)\Big]\leq\infty$. Now denote $$V({z}) =\frac{1}{n+1} \sum_{k=1}^{K}\frac{1}{u_k^{n-1}}W^{n+1}_k(z).$$ 
			Consider a binomial expansion. $\forall a, b\in \R$, one has
			\begin{equation}\label{eq:binom}
				\begin{aligned}(a+b)^{n+1} =& \sum_{\ell=0}^{n+1}{n+1\choose\ell}b^{\ell}\cdot a^{n+1-\ell} \\
					=&a^{n+1}  + (n+1)b\cdot a^n + \sum_{\ell=2}^{n+1}{n+1\choose\ell}b^{\ell}\cdot a^{n+1-\ell}. 
			\end{aligned}\end{equation}
			Then by binomial expansion, the generator becomes
			$$\begin{aligned}
				GV({z})=
				&\frac{1}{n+1}\sum_{i=1}^{I}\lambda_i^{(r)}\sum_{k\in \mathcal{K}(i)}\frac{1}{u_k^{n-1}}\left(\sum_{\ell=1}^{n+1}{n+1\choose\ell}m_{ik}^\ell W_k^{n+1-\ell}(z)\right)\mathbbm{1}\left(k=L^{(i)}(z)\right)\\
				&+\frac{1}{n+1}\sum_{k=1}^{K}\sum_{i\in \mathcal{I}(k)}\frac{\mu_{ik}P_{ik}(z)}{u_k^{n-1}}\left(\sum_{\ell=1}^{n+1}{n+1\choose\ell}\left(-m_{ik}\right)^\ell W_k^{n+1-\ell}(z)\right)\\
				=&\sum_{i=1}^{I}\lambda_i^{(r)}\sum_{k\in \mathcal{K}(i)}\frac{m_{ik}}{u_k^{n-1}}W^n_k(z)\mathbbm{1}\left(k=L^{(i)}(z)\right)\\
				&-\sum_{k=1}^{K}\frac{1}{u_k^{n-1}}W^n_k(z)\sum_{i\in \mathcal{I}(k)}P_{ik}(z) +o\Big(W^n_k(z)\Big)\\
				\stackrel{(a)}{\leq}& \sum_{i=1}^{I}\lambda_i^{(r)}\sum_{k\in \mathcal{K}(i)}\frac{1}{v_i^{n-1}}\left(m_{ik}W_k(z)\right)^n\mathbbm{1}\left(k=L^{(i)}(z)\right)-\sum_{k=1}^{K}\frac{1}{u_k^{n-1}}W^n_k(z) +o\Big(W^n_k(z)\Big)\\
				\stackrel{(b)}{=}&\sum_{i=1}^{I}\frac{\lambda_i^{(r)}}{v_i^{n-1}}\min_{k\in \mathcal{K}(i)}\left(m_{ik}W_k(z)\right)^n-\sum_{k=1}^{K}\frac{1}{u_k^{n-1}}W^n_k(z) +o\Big(W^n_k(z)\Big),\\
			\end{aligned}$$
			where $(a)$ is by $(\ref{eq:dik})$, $(b)$ is by the definition of WWTA policy. Under Assumption \ref{assu1}, we can pick an optimal solution $x^*$ and denote $\lambda_{ik}\triangleq  x^*_{ik}\mu_{ik}$ for $(i,k)\in\mathcal{J}$, then $\lambda_{ik}$ satisfies $\lambda_{i}=\sum_{k\in \mathcal{K}(i)}\lambda_{ik}$ and $\sum_{i\in \mathcal{I}(k)}\lambda_{ik}m_{ik} =1$. Denote $\lambda^{(r)}_{ik} = \lambda_{ik}(1-r)$, then $\lambda_{i}^{(r)}=\sum_{k\in \mathcal{K}(i)}\lambda_{ik}^{(r)}$ and $\sum_{i\in \mathcal{I}(k)}\lambda_{ik}^{(r)}m_{ik} <1$. Following similar argument as in the proof of \cite{DaiHarr2020} (Theorem 11.6), the generator becomes
			$$\begin{aligned}
				GV({z})
				\leq &\sum_{i=1}^{I}\frac{1}{v_i^{n-1}}\sum_{k\in \mathcal{K}(i)}\lambda_{ik}^{(r)}\left(\frac{1}{\mu_{ik}}W_k(z)\right)^n-\sum_{k=1}^{K}\frac{1}{u_k^{n-1}}W^n_k(z) +o\Big(W^n_k(z)\Big)\\
				\stackrel{(c)}{=}&\sum_{i=1}^{I}\sum_{k\in \mathcal{K}(i)}^b\frac{\lambda_{ik}^{(r)}}{\mu_{ik}}\frac{1}{u_k^{n-1}}W^n_k(z)+\sum_{i=1}^{I}\sum_{k\in \mathcal{K}(i)}^{nb}\frac{\lambda_{ik}^{(r)}}{\mu_{ik}}\frac{1}{(u_k-d_{ik})^{n-1}}W^n_k(z)\\
				&-\sum_{k=1}^{K}\frac{1}{u_k^{n-1}}W^n_k(z) +o\Big(W^n_k(z)\Big)\\
				=&\sum_{i=1}^{I}\sum_{k\in \mathcal{K}(i)}\frac{\lambda_{ik}^{(r)}}{\mu_{ik}}\frac{1}{u_k^{n-1}}W^n_k(z)-\sum_{k=1}^{K}\frac{1}{u_k^{n-1}}W^n_k(z)+o\Big(W^n_k(z)\Big)\\
				&+\sum_{i=1}^{I}\sum_{k\in \mathcal{K}(i)}^{nb}\frac{\lambda_{ik}^{(r)}}{\mu_{ik}}\frac{1}{(u_k-d_{ik})^{n-1}}W^n_k(z)-\sum_{i=1}^{I}\sum_{k\in \mathcal{K}(i)}^{nb}\frac{\lambda_{ik}^{(r)}}{\mu_{ik}}\frac{1}{u_k^{n-1}}W^n_k(z)\\
				=&\sum_{k=1}^{K}\frac{1}{u_k^{n-1}}\left(\sum_{i\in \mathcal{I}(k)}\frac{\lambda_{ik}^{(r)}}{\mu_{ik}}-1\right)W^n_k(z)+o\Big(W^n_k(z)\Big)\\
				&+\sum_{i=1}^{I}\sum_{k\in \mathcal{K}(i)}^{nb}\frac{\lambda_{ik}^{(r)}}{\mu_{ik}}\left(\frac{d_{ik}}{u_k(u_k-d_{ik})}\right)^{n-1}W^n_k(z),\\
			\end{aligned}$$
			where $(c)$ is by considering basic and non-basic activities separately using $(\ref{eq:dik})$ again. The last term above is necessarily zero, since $d_{ik}x_{ik}=0$ by complementary slackness. Therefore, the generator becomes
			$$\begin{aligned}
				GV({z})=&\sum_{k=1}^{K}\frac{1}{u_k^{n-1}}\left(\sum_{i\in \mathcal{I}(k)}\frac{\lambda_{ik}^{(r)}}{\mu_{ik}}-1\right)W^n_k(z)+o\Big(W^n_k(z)\Big)\\
				\leq& -\frac{r}{\bar{u}^{n-1}} \sum_{k=1}^{K}W^n_k(z)+o\Big(W^n_k(z)\Big),
			\end{aligned}$$
			where  $\bar{u}=\max_{k	\in\mathcal{K}}u_k>0$. Then there exists some constant $c > 0$, such that for $\sum_{k=1}^{K}W^n_k(z)\geq c_w, k\in\mathcal{K}$,
			$$-\frac{r}{\bar{u}^{n-1}} \sum_{k=1}^{K}W^n_k(z)+o\Big(W^n_k(z)\Big)\leq -cr\sum_{i=1}^{I}W^n_k(z).$$
			Then $\exists d>0$,
			$$GV({z})\leq -cr\sum_{k=1}^{K}W^n_k(z) + d\mathbbm{1}\bigg(\sum_{k=1}^{K}W^n_k(z)< c_w\bigg),$$
			invoking [\cite{MeynTwee1993a}, Theorem 4.3], we have 
			$$\sum_{k=1}^{K}\mathbb{E}\bigg[W_k^{n}\Big(Z^{(r)}\Big)\bigg]<\infty.$$
		\end{proof}

		\section{Proofs in Section \ref{sec:ssc_moment}}
		\subsection{Proof of Lemma $\ref{lem:ssc1}$}\label{sec:ssc_lm1}
		Before proving Lemma $\ref{lem:ssc1}$, we first present the following lemma.
		\begin{lemma}\label{alge_inequality}
			For any $p,x\in\mathbb{R}$, if $p>0, x\geq -1$, then the following inequality holds:
			$$\frac{x}{(1+x)^p +1}\leq\frac{x}{2}.$$
		\end{lemma}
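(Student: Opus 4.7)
The plan is to reduce the inequality to a statement about the function $y \mapsto y^p$ on the nonnegative real line. Setting $y = 1 + x$, the hypothesis $x \geq -1$ becomes $y \geq 0$, and the claim is equivalent to
$$\frac{y-1}{y^p+1} \leq \frac{y-1}{2}.$$
Since $y^p + 1 > 0$ and $2 > 0$, I can clear denominators without changing the direction of the inequality once I keep track of the sign of the common factor $y-1$.

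I would then split into three cases according to the sign of $y - 1$. When $y = 1$, both sides vanish and equality is immediate. When $y > 1$, the factor $y-1$ is strictly positive, so the inequality reduces to $2 \leq y^p + 1$, i.e. $y^p \geq 1$, which follows from $y > 1$ and $p > 0$ by monotonicity of $t \mapsto t^p$ on $(0,\infty)$. When $0 \leq y < 1$, the factor $y - 1$ is strictly negative, so after dividing by it the inequality flips and becomes $2 \geq y^p + 1$, i.e. $y^p \leq 1$, which again follows from $0 \leq y < 1$ and $p > 0$ (with the convention $0^p = 0$ when $p > 0$).

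There is essentially no obstacle here; the only mild subtlety is remembering to reverse the inequality when dividing by the negative quantity $y - 1$ in the range $-1 \leq x < 0$, and to check the boundary case $x = -1$ (equivalently $y = 0$), where the left-hand side becomes $-1/(0+1) = -1$ and the right-hand side is $-1/2$, so $-1 \leq -1/2$ holds. Putting the three cases together yields the stated inequality for all admissible $x$ and all $p > 0$.
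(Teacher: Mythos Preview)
Your proof is correct and is essentially the same as the paper's: both arguments split into the three cases $x=0$, $x>0$, and $-1\le x<0$ (equivalently $y=1$, $y>1$, $0\le y<1$ after your substitution) and use that $(1+x)^p \gtrless 1$ according as $x \gtrless 0$ when $p>0$. The substitution $y=1+x$ is purely cosmetic and does not change the argument.
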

		\begin{proof}[Proof of Lemma \ref{alge_inequality}]
			If $x=0$, the equality holds; if $x>0$, then $(1+x)^p +1> 2$; if $-1\leq x< 0$, $(1+x)^p +1< 2$;  the inequality therefore holds for any $ x\geq -1$. 
		\end{proof}
		\begin{proof}

			With the test function (\ref{eq:ssctest}) and incremental change (\ref{eq:increment}), one has
			\begin{equation}\label{eq:ssc1}\begin{aligned}	f(z+e_{ik})-f(z) =& \|W_{\perp}(z)+\delta_{ik}\|^{n+1} - \|W_{\perp}(z)\|^{n+1} \\
					=&\frac{\|W_{\perp}(z)+\delta_{ik}\|^{2(n+1)} - \|W_{\perp}(z)\|^{2(n+1)}}{\|W_{\perp}(z)+\delta_{ik}\|^{n+1} + \|W_{\perp}(z)\|^{n+1}}.\end{aligned}\end{equation}
			In the numerator, with (\ref{eq:incremx}) and binomial expansion (\ref{eq:binom}), one can obtain
			$$\begin{aligned}
				&\|W_{\perp}(z)+\delta_{ik}\|^{2(n+1)}
				=\bigg[\|W_{\perp}(z)\|^2 + 2\delta_{ik}W^\top_{\perp}(z) + \|\delta_{ik}\|^2\bigg]^{n+1}
				=\bigg[\|W_{\perp}(z)\|^2 + 2m_{ik}W_{\perp,k}(z) + \|\delta_{ik}\|^2\bigg]^{n+1}\\
				=&\|W_{\perp}(z)\|^{2(n+1)} + (n+1)\bigg[2m_{ik}W_{\perp,k}(z) + \|\delta_{ik}\|^2\bigg]\|W_{\perp}(z)\|^{2n}  + A_1(z), 
			\end{aligned}$$
			where $A_1(z)\triangleq \sum_{\ell=2}^{n+1}{n+1\choose\ell}\big[2m_{ik}W_{\perp,k}(z) + \|\delta_{ik}\|^{2}\big]^{\ell}\|W_{\perp}(z)\|^{2(n+1-\ell)}.$ Plugging this back into (\ref{eq:ssc1}), one has
			\begin{equation}\label{eq:ssc2}\begin{aligned}	f(z+e_{ik})-f(z) 
					=&\frac{(n+1)\big[2m_{ik}W_{\perp,k}(z) + \|\delta_{ik}\|^2\big]\|W_{\perp}(z)\|^{2n}}{\|W_{\perp}(z)+\delta_{ik}\|^{n+1} + \|W_{\perp}(z)\|^{n+1}} + \frac{ A_1(z)}{\|W_{\perp}(z)+\delta_{ik}\|^{n+1} + \|W_{\perp}(z)\|^{n+1}}\\
					=&\frac{(n+1)\big[2m_{ik}W_{\perp,k}(z) + \|\delta_{ik}\|^2\big]\|W_{\perp}(z)\|^{n-1}  }{\big[1 +\frac{ 2m_{ik}W_{\perp,k}(z) + \|\delta_{ik}\|^2}{\|W_{\perp}(z)\|^2}\big]^{\frac{n+1}{2}} + 1}+ \frac{ A_1(z)}{\|W_{\perp}(z)+\delta_{ik}\|^{n+1} + \|W_{\perp}(z)\|^{n+1}}\\
					=&\frac{(n+1)\frac{\big[2m_{ik}W_{\perp,k}(z) + \|\delta_{ik}\|^2\big]}{\|W_{\perp}(z)\|^{2}}\|W_{\perp}(z)\|^{n+1}  }{\big[1 +\frac{ 2m_{ik}W_{\perp,k}(z) + \|\delta_{ik}\|^2}{\|W_{\perp}(z)\|^2}\big]^{\frac{n+1}{2}} + 1}+ \frac{ A_1(z)}{\|W_{\perp}(z)+\delta_{ik}\|^{n+1} + \|W_{\perp}(z)\|^{n+1}}\\
					\leq &(n+1)\frac{\big[2m_{ik}W_{\perp,k}(z) + \|\delta_{ik}\|^2\big]}{2\|W_{\perp}(z)\|^{2}}\|W_{\perp}(z)\|^{n+1}  + \frac{ A_1(z)}{\|W_{\perp}(z)+\delta_{ik}\|^{n+1} + \|W_{\perp}(z)\|^{n+1}},
			\end{aligned}\end{equation}
			where in the second equality, we divide both the numerator and denominator by $\|W_{\perp}(z)\|^{n+1}$; in the last inequality, we apply Lemma \ref{alge_inequality}.  Note that
			$$\begin{aligned}
				&\frac{ A_1(z)}{\|W_{\perp}(z)+\delta_{ik}\|^{n+1} + \|W_{\perp}(z)\|^{n+1}}
				\leq \frac{\sum_{\ell=2}^{n+1}{n+1\choose\ell}\big[2m_{ik}\big|W_{\perp,k}(z) \big|+ \|\delta_{ik}\|^{2}\big]^{\ell}\|W_{\perp}(z)\|^{2(n+1-\ell)}}{ \|W_{\perp}(z)\|^{n+1}}\\
				\leq& \sum_{\ell=2}^{n+1}{n+1\choose\ell}\big[2m_{ik}\big\|W_{\perp}(z) \big\|+ \|\delta_{ik}\|^{2}\big]^{\ell}\|W_{\perp}(z)\|^{n+1-2\ell} \triangleq \sum_{\ell=-(n+1)}^{n-1}c'_{\ell}\|W_{\perp}(z)\|^{\ell}\leq \sum_{\ell=0}^{n-1}c'_{\ell}\|W_{\perp}(z)\|^{\ell},
			\end{aligned}$$
			where the triangle inequality means we rearrange the polynomials w.r.t the power of $\|W_{\perp}(z)\|$ and denotes the corresponding constants as $c'_{\ell}>0$; in the last inequality we slightly assume $\|W_{\perp}(z)\|\geq 1$, since the overall proof of state-space collapse given by $\|W_{\perp}(z)\|<1$ is trivial. Therefore, $(\ref{eq:ssc2})$ becomes
			$$\begin{aligned}	
				f(z+e_{ik})-f(z) \leq &(n+1)\frac{\big[2m_{ik}W_{\perp,k}(z) + \|\delta_{ik}\|^2\big]}{2\|W_{\perp}(z)\|^{2}}\|W_{\perp}(z)\|^{n+1} + \sum_{\ell=0}^{n-1}c'_{\ell}\|W_{\perp}(z)\|^{\ell} \\
				= &(n+1)m_{ik}W_{\perp,k}(z)\|W_{\perp}\|^{n-1} +(n+1)\frac{ \|\delta_{ik}\|^2}{2}\|W_{\perp}(z)\|^{n-1} + \sum_{\ell=0}^{n-1}c'_{\ell}\|W_{\perp}(z)\|^{\ell} \\
				\triangleq &(n+1)m_{ik}W_{\perp,k}(z)\|W_{\perp}(z)\|^{n-1}  + \sum_{\ell=0}^{n-1}c^{A}_{\ell}\|W_{\perp}(z)\|^{\ell},\\
			\end{aligned}$$
			where in the last equality, we only keep the first term, and all the other terms are treated as the polynomials w.r.t the power of $\|W_{\perp}(z)\|$ and denotes the corresponding constants as $c^{A}_{\ell}>0$ for each $\ell$. Therefore, the first inequality in this Lemma has been proved. The second inequality regarding $f(z-e_{ik})-f(z)$ can be proved similarly. 
		\end{proof}

		\subsection{Proof of Lemma \ref{lem:ssc2}}\label{sec:ssc_lm2}
		\begin{proof}
			For (\ref{eq:ub1}),
			\begin{equation}\label{eq:lm2}
				\begin{aligned}
					&\sum_{i=1}^{I}\lambda_i\sum_{k\in \mathcal{K}(i)}m_{ik}W_{\perp,k}(z)\mathbbm{1}\big(k=L^{(i)}({z})\big)
					-\sum_{k=1}^{K}W_{\perp,k}(z)\\
					=&\sum_{i=1}^{I}\lambda_i\sum_{k\in \mathcal{K}(i)}m_{ik}\bigg(W_k(z) -\frac{u_k}{\|u\|^2}\sum_{\ell=1}^{K}u_{\ell}W_{\ell}(z)\bigg)\mathbbm{1}\big(k=L^{(i)}({z})\big)-\sum_{k=1}^{K}\bigg(W_k(z) -\frac{u_k}{\|u\|^2}\sum_{\ell=1}^{K}u_{\ell}W_{\ell}(z)\bigg)\\
					=&\sum_{i=1}^{I}\lambda_i\sum_{k\in \mathcal{K}(i)}m_{ik}W_k(z) \mathbbm{1}\big(k=L^{(i)}({z})\big)-\sum_{k=1}^{K}W_k(z) \\
					&-\frac{1}{\|u\|^2}\left(\sum_{\ell=1}^{K}u_{\ell}W_{\ell}(z)\right)\left(\sum_{i=1}^{I}\lambda_i\sum_{k\in \mathcal{K}(i)}\frac{u_k}{\mu_{ik}}\mathbbm{1}\big(k=L^{(i)}({z})\big)-\sum_{k=1}^{K}u_k\right)\\
					\leq&\sum_{i=1}^{I}\lambda_i\sum_{k\in \mathcal{K}(i)}m_{ik}W_k(z) \mathbbm{1}\big(k=L^{(i)}({z})\big)-\sum_{k=1}^{K}W_k, \\
			\end{aligned}   \end{equation}
			where in the last inequality, we utilize Lemma $\ref{lem:pd1}$, $(\ref{eq:dik})$ to obtain $\sum_{i=1}^i\lambda_iv_i\sum_{k\in \mathcal{K}(i)}\mathbbm{1}\big(k=L^{(i)}({z})\big) = \sum_{k=1}^Ku_k = 1$, and
			$$\begin{aligned}
				&\sum_{i=1}^{I}\lambda_i\sum_{k\in \mathcal{K}(i)}\frac{u_k}{\mu_{ik}}\mathbbm{1}\big(k=L^{(i)}({z})\big)-\sum_{k=1}^{K}u_k
				=\sum_{i=1}^{I}\lambda_i\sum_{k\in \mathcal{K}(i)}(v_i +\frac{d_{ik}}{\mu_{ik}})\mathbbm{1}\big(k=L^{(i)}({z})\big)-\sum_{k=1}^{K}u_k\\
				=&\sum_{i=1}^{I}\lambda_i\sum_{k\in \mathcal{K}(i)} \frac{d_{ik}}{\mu_{ik}}\mathbbm{1}\big(k=L^{(i)}({z})\big)\geq 0.\\
			\end{aligned}$$
			Under Assumption \ref{assu1}, one can pick one of the optimal solutions $x^*$ and define $\lambda_{ik}\triangleq  x^*_{ik}\mu_{ik}$ for $(i,k)\in\mathcal{J}$, then $\lambda_{ik}$ satisfies $\lambda_{i}=\sum_{k\in \mathcal{K}(i)}\lambda_{ik}$ and $\sum_{i\in \mathcal{I}(k)}\lambda_{ik}m_{ik} =1$. 
			Note that by the definition of the WWTA policy, 
			$$\sum_{k\in \mathcal{K}(i)}m_{ik}W_k(z) \mathbbm{1}\big(k=L^{(i)}({z})\big) = \min_{k\in \mathcal{K}(i)}m_{ik}W_k(z),$$
			then by adding and subtracting a term, and replacing $\lambda_i=\sum_{k\in \mathcal{K}(i)}\lambda_{ik}$, the last line in (\ref{eq:lm2}) becomes
			$$\begin{aligned}
				&\sum_{i=1}^{I}(\sum_{k\in \mathcal{K}(i)}\lambda_{ik})\min_{k'\in \mathcal{K}(i)}m_{ik'}W_{k'}(z)-\sum_{k=1}^{K}W_k(z) \\
				=&\sum_{i=1}^{I}\sum_{k\in \mathcal{K}(i)}\lambda_{ik}\min_{k'\in \mathcal{K}(i)}m_{ik'}W_{k'}(z)-\sum_{i=1}^{I}\sum_{k\in \mathcal{K}(i)}\lambda_{ik}m_{ik}W_k(z)+\sum_{k=1}^K\sum_{i\in \mathcal{I}(k)}\lambda_{ik}m_{ik}W_k(z)-\sum_{k=1}^{K}W_k(z)\\
				=&\sum_{i=1}^{I}\sum_{k\in \mathcal{K}(i)}\lambda_{ik}\min_{k'\in \mathcal{K}(i)}m_{ik'}W_{k'}(z)-\sum_{i=1}^{I}\sum_{k\in \mathcal{K}(i)}\lambda_{ik}m_{ik}W_k(z),\\
			\end{aligned}$$
			where in the last equality we apply $\sum_{i\in \mathcal{I}(k)}\lambda_{ik}m_{ik}=1$ for each $k \in \mathcal{K}$. Since $\lambda_{ik}=0$ for non-basic activities, then one has
			$$\begin{aligned}&\sum_{i=1}^{I}\sum_{k\in \mathcal{K}(i)}\lambda_{ik}\left(\min_{k'\in \mathcal{K}(i)}m_{ik'}W_{k'} (z)- m_{ik}W_k(z)\right)\\
				=&\sum_{i=1}^{I}\sum_{k\in \mathcal{K}(i)}^{b}\lambda_{ik}v_i\left(\min_{k'\in \mathcal{K}(i)}\frac{1}{\mu_{ik'}v_i}W_{k'}(z) - \frac{1}{\mu_{ik}v_i}W_k(z)\right)\\
				=&\sum_{i=1}^{I}\sum_{k\in \mathcal{K}(i)}^{b}\lambda_{ik}v_i\left(\min_{k'\in \mathcal{K}(i)}\frac{1}{u_{k'}-d_{ik'}}W_{k'}(z) - \frac{1}{u_k}W_k(z)\right),\\
			\end{aligned}$$
			where $d_{ik'}$ denotes the activity $(i,k'), k'\in \mathcal{K}(i)$, which achieves the minimum value, could be either basic ($d_{ik'}=0$) or non-basic ($d_{ik'}\neq 0$), since WWTA policy does not restrict the routing decision based on the information of basic or non-basic activities. If we take the minimum value only among the basic activities, one has
			$$\min_{k'\in \mathcal{K}(i)}\frac{1}{u_{k'}-d_{ik'}}W_{k'}(z)  \leq \min_{k''\in \mathcal{K}(i)}^{b}\frac{1}{u_{k''}}W_{k''}(z).$$
			Then by subtracting and adding the minimum over the set of basic activities, one has
			\begin{equation}\label{eq:ssc_basic}
				\begin{aligned}	
					&\sum_{i=1}^{I}\sum_{k\in \mathcal{K}(i)}^{b}\lambda_{ik}v_i\left(\min_{k'\in \mathcal{K}(i)}\frac{1}{u_{k'}-d_{ik'}}W_{k'}(z) - \frac{1}{u_k}W_k(z)\right)\\
					=&\sum_{i=1}^{I}\sum_{k\in \mathcal{K}(i)}^{b}\lambda_{ik}v_i\left(\min_{k'\in \mathcal{K}(i)}\frac{1}{u_{k'}-d_{ik'}}W_{k'}(z) -\min_{k''\in \mathcal{K}(i)}^{b}\frac{1}{u_{k''}}W_{k''}(z)  + \min_{k''\in \mathcal{K}(i)}^{b}\frac{1}{u_{k''}}W_{k''}(z)- \frac{1}{u_k}W_k(z)\right)\\
					\leq&\sum_{i=1}^{I}\sum_{k\in \mathcal{K}(i)}^{b}\lambda_{ik}v_i\left( \min_{k''\in \mathcal{K}(i)}^{b}\frac{1}{u_{k''}}W_{k''}(z)- \frac{1}{u_k}W_k(z)\right)\\
					\leq&-\min_{i,k}^{b}(\lambda_{ik}v_i)\sum_{i=1}^{I}\sum_{k\in \mathcal{K}(i)}^{b}\left| \frac{1}{u_k}W_k(z)-\min_{k''\in \mathcal{K}(i)}^{b}\frac{1}{u_{k''}}W_{k''}(z)\right|,\\
				\end{aligned}
			\end{equation}
			where the last inequality is true by observing that $k$ is chosen for any $k\in \mathcal{K}(i)$ such that $(i,k)$ is basic activity, and $\lambda_{ik}>0$ for all basic activities by definition.
			
			Now we consider $T_{(1)}(z)$, $T_{(K)}(z)$. By Assumption \ref{assu2}, one can always find a way for servers $(1)$ and $(K)$ to communicate, by several direct communicating servers. To illustrate, we assume server $(1)$ and $(K)$ communicates by following link by a set of basic activities:
			$$\begin{aligned}
				(1)\stackrel{i_1}{\rightarrow}  k_1\stackrel{i_2}{\rightarrow} k_2\stackrel{i_3}{\rightarrow} k_3\stackrel{i_4}{\rightarrow} (K), \\
			\end{aligned}$$
			From the last line of (\ref{eq:ssc_basic}), by only picking up this set of basic activities and upper bounding all the terms of other basic activities by zero, one has
			$$
			\begin{aligned}	
				&-\sum_{i=1}^{I}\sum_{k\in \mathcal{K}(i)}^{b}\left|\frac{1}{u_k}W_k(z)-\min_{k''\in \mathcal{K}(i)}^{b}\frac{1}{u_{k''}}W_{k''}(z)\right|\\
				\leq&-\left| T_{(1)}(z)-\min_{k''\in \mathcal{K}(i_1)}^{b}\frac{1}{u_{k''}}W_{k''}(z)\right|-\left| \frac{1}{u_{k_1}}W_{k_1}(z)-\min_{k''\in \mathcal{K}(i_1)}^{b}\frac{1}{u_{k''}}W_{k''}(z)\right|
				\\
				&-\left| \frac{1}{u_{k_1}}W_{k_1}(z)-\min_{k''\in \mathcal{K}(i_2)}^{b}\frac{1}{u_{k''}}W_{k''}(z)\right|
				-\left| \frac{1}{u_{k_2}}W_{k_2}(z)-\min_{k''\in \mathcal{K}(i_2)}^{b}\frac{1}{u_{k''}}W_{k''}(z)\right|\\
				&-\left| \frac{1}{u_{k_2}}W_{k_2}(z)-\min_{k''\in \mathcal{K}(i_3)}^{b}\frac{1}{u_{k''}}W_{k''}(z)\right|
				-\left| \frac{1}{u_{k_3}}W_{k_3}(z)-\min_{k''\in \mathcal{K}(i_3)}^{b}\frac{1}{u_{k''}}W_{k''}(z)\right|\\
				&-\left| \frac{1}{u_{k_3}}W_{k_3}(z)-\min_{k''\in \mathcal{K}(i_4)}^{b}\frac{1}{u_{k''}}W_{k''}(z)\right|
				-\left| T_{(K)}(z)-\min_{k''\in \mathcal{K}(i_4)}^{b}\frac{1}{u_{k''}}W_{k''}(z)\right|\\
				\leq&-\left|\frac{1}{u_{k_1}}W_{k_1}(z)-T_{(1)}(z)\right|-\left| \frac{1}{u_{k_1}}W_{k_1}(z)-\frac{1}{u_{k_2}}W_{k_2}(z)\right|\\
				&-\left| \frac{1}{u_{k_2}}W_{k_2}(z)-\frac{1}{u_{k_3}}W_{k_3}(z)\right|-\left| \frac{1}{u_{k_3}}W_{k_3}(z)-T_{(K)}(z)\right|\\
				\leq& -\left(T_{(K)}(z) - T_{(1)}(z)\right),
			\end{aligned}$$
			where in the last two inequalities we apply the triangle inequality repeatedly. The proof of (\ref{eq:ub1}) is therefore complete. 
			
			For (\ref{eq:ub2}), one has
			$$\begin{aligned}
				&-r\sum_{i=1}^{I}\lambda_i\sum_{k\in \mathcal{K}(i)}m_{ik}W_{\perp,k}(z)\mathbbm{1}\big(k=L^{(i)}({z})\big)\\
				=&r\sum_{i=1}^{I}\lambda_iv_i\sum_{k\in \mathcal{K}(i)}\frac{u_k}{u_k-d_{ik}}\left[\frac{1}{\|u\|^2}\sum_{\ell=1}^{K}u_{\ell}W_{\ell}(z)-\frac{1}{u_k}W_k(z) \right]\mathbbm{1}\big(k=L^{(i)}({z})\big)\\
				\leq&r\sum_{i=1}^{I}\lambda_iv_i\sum_{k\in \mathcal{K}(i)}\frac{u_k}{u_k-d_{ik}}\left[T_{(K)}(z)-T_{(1)}(z) \right]\mathbbm{1}\big(k=L^{(i)}({z})\big)\\
				\leq&r\max_{i,k}\frac{u_k}{u_k-d_{ik}}\sum_{i=1}^{I}\lambda_iv_i\sum_{k\in \mathcal{K}(i)}\left[T_{(K)}(z)-T_{(1)}(z) \right]\mathbbm{1}\big(k=L^{(i)}({z})\big)\\
				=& r\max_{i,k}\frac{u_k}{u_k-d_{ik}}\big[T_{(K)} (z)- T_{(1)}(z)\big].\end{aligned}$$
			
		\end{proof}

		\section{Proofs in Section \ref{sec:outline2}}
		
		\subsection{Proof of Lemma $\ref{lem:nbcon}$}\label{sec:nbcon}
		\begin{proof}
			\begin{enumerate}
				\item
				In $(\ref{eq:nbcon_2})$, for non-basic activity $(i, k)$, we have
				\begin{align}
					&\E\left[d_{ik}\frac{1}{u_k}W_k({Z}^{(r)})\mathbbm{1}\left(k = L^{(i)}({Z}^{(r)})\right)\right] \label{eq:B1}\\
					\stackrel{(a)}{<}&\E\left[d_{ik}\frac{1}{u_k-d_{ik}}W_k({Z}^{(r)})\mathbbm{1}\left(k = L^{(i)}({Z}^{(r)})\right)\right]\label{eq:B2}\\		
					\stackrel{(b)}{\leq}& \E\left[d_{ik}\frac{1}{u_{k'}}W_{k'}({Z}^{(r)})\mathbbm{1}\left(k = L^{(i)}({Z}^{(r)})\right)\right], \label{eq:B3}
				\end{align}      
				where $(a)$ is due to $0<d_{ik}< u_k$. For $(b)$, when  $(i,k)$ is non-basic activity, there must exist a basic activity $(i,k')$ according to complete resource pooling assumption. When $k = L^{(i)}({Z}^{(r)})$, following the definition of the WWTA policy, $\frac{1}{\mu_{ik}}W_k({Z}^{(r)})\leq \frac{1}{\mu_{ik'}}W_{k'}({Z}^{(r)})$. If we divide both sides by $v_i$, it becomes
				\begin{align*}
					\frac{1}{u_k-d_{ik}}W_k({Z}^{(r)})\leq \frac{1}{u_{k'}-d_{ik'}}W_{k'}({Z}^{(r)}) \leq \frac{1}{u_{k'}}W_{k'}({Z}^{(r)}).
				\end{align*}
				Therefore, $(\ref{eq:B2})-(\ref{eq:B1})\leq (\ref{eq:B3})-(\ref{eq:B1})$ implies
				$$\begin{aligned}
					0<&\E\left[d_{ik}\frac{d_{ik}}{(u_k-d_{ik})u_k}W_k({Z}^{(r)})\mathbbm{1}\left(k = L^{(i)}({Z}^{(r)})\right)\right]\\
					\leq & \E\left[d_{ik}\left(\frac{1}{u_{k'}}W_{k'}({Z}^{(r)})-\frac{1}{u_k}W_k({Z}^{(r)})\right)\mathbbm{1}\left(k = L^{(i)}({Z}^{(r)})\right)\right]\\
					{\leq}&\E\left[d_{ik}\left[\max_{k\in\mathcal{K}}\frac{1}{u_{k}}W_{k}({Z}^{(r)})-\min_{k\in\mathcal{K}}\frac{1}{u_{k}}W_{k}({Z}^{(r)})\right]\mathbbm{1}\left(k = L^{(i)}({Z}^{(r)})\right)\right]\\		
					\stackrel{(c)}{=}&d_{ik}\E\left[\left(\max_{k\in\mathcal{K}}T_{k}({Z}^{(r)})-\min_{k\in\mathcal{K}}T_{k}({Z}^{(r)})\right)^2\right]^{1/2}\Prob\left(k = L^{(i)}({Z}^{(r)})\right)^{1/2}\\
					\stackrel{(d)}{=}&O(r^{1/2}).\\
				\end{aligned}$$
				where $(c)$ is by Cauchy-Schwarz inequality, $(d)$ uses Proposition \ref{prop:sscmoment} and $(\ref{eq:nbcon_1})$.
				Therefore, we have proved for non-basic activity $(i,k)$ with $d_{ik}>0$, 
				\begin{equation}\label{single_nb}
					\E\left[W_k({Z}^{(r)})\mathbbm{1}\left(k = L^{(i)}({Z}^{(r)})\right)\right]=O(r^{1/2}).   
				\end{equation}
				Furthermore, for non-basic activity $(i,k)$,
				$$\begin{aligned}
					&\E\left[\bigg(\sum_{k'=1}^Ku_{k'}W_{k'}({Z}^{(r)})\bigg)\mathbbm{1}\left(k=L^{(i)}({Z}^{(r)})\right)\right]\\
					=&\E\left[\sum_{k'=1}^Ku^2_{k'}\bigg(\frac{1}{u_{k'}}W_{k'}({Z}^{(r)})-\frac{1}{u_{k}}W_{k}({Z}^{(r)})\bigg)\mathbbm{1}\left(k=L^{(i)}({Z}^{(r)})\right)\right]\\
					&+\sum_{k'=1}^Ku^2_{k'}\frac{1}{u_{k}}\E\left[W_{k}({Z}^{(r)})\mathbbm{1}\left(k=L^{(i)}({Z}^{(r)})\right)\right]\\
					\stackrel{(e)}{\leq}&\sum_{k'=1}^Ku^2_{k'}\E\left[\bigg(\max_{k\in\mathcal{K}}\frac{1}{u_{k}}W_{k}({Z}^{(r)})-\min_{k\in\mathcal{K}}\frac{1}{u_{k}}W_{k}({Z}^{(r)})\bigg)\mathbbm{1}\left(k=L^{(i)}({Z}^{(r)})\right)\right]+O(r^{1/2})\\
					\stackrel{(f)}{\leq}&\sum_{k'=1}^Ku^2_{k'}\E\bigg[\bigg(\max_{k\in\mathcal{K}}\frac{1}{u_{k}}W_{k}({Z}^{(r)})-\min_{k\in\mathcal{K}}\frac{1}{u_{k}}W_{k}({Z}^{(r)})\bigg)^2\bigg]^{1/2}\Prob\left(k=L^{(i)}({Z}^{(r)})\right)^{1/2}+O(r^{1/2})\\
					\stackrel{(g)}{=}&O(r^{1/2}),
				\end{aligned}$$
				where $(e)$ uses (\ref{single_nb}), $(f)$ is by Cauchy-Schwarz inequality, $(g)$ uses Proposition \ref{prop:sscmoment} and $(\ref{eq:nbcon_1})$. The proof of $(\ref{eq:nbcon_2})$ is complete. 
				\item $(\ref{eq:nbcon_3})$ and $(\ref{eq:nbcon_4})$ will be proved together:\\
				Let $f({z}) = (\sum_{i=1}^I\sum_{k\in \mathcal{K}{(i)}}v_iz_{ik})^2$, then the generator (\ref{eq:generator}) becomes
				$$\begin{aligned}
					Gf({z})&=2\sum_{i=1}^I\sum_{k\in \mathcal{K}{(i)}}\lambda_{i}^{(r)}v_i \bigg(\sum_{i=1}^I\sum_{k\in \mathcal{K}{(i)}}v_iz_{ik}\bigg)\mathbbm{1}\left(k=L^{(i)}({z})\right)+\sum_{i=1}^I\sum_{k\in \mathcal{K}{(i)}}\lambda_{i}^{(r)}v^2_i \mathbbm{1}\left(k=L^{(i)}({z})
					\right)\\
					&-2\sum_{k=1}^{K}\sum_{i\in \mathcal{I}(k)}\mu_{ik}v_iP_{ik}({z})\bigg(\sum_{i=1}^I\sum_{k\in \mathcal{K}{(i)}}v_iz_{ik}\bigg)+\sum_{k=1}^{K}\sum_{i\in \mathcal{I}(k)}\mu_{ik}v^2_iP_{ik}({z}).
				\end{aligned}$$
				It follows from Lemma \ref{lem:steady_exp} that
				\begin{equation}\label{equivleq1}
					\begin{aligned}
						&\sum_{k=1}^{K}\sum_{i\in \mathcal{I}(k)}\mu_{ik}v_i\E\bigg[P_{ik}({Z}^{(r)})\bigg(\sum_{i=1}^I\sum_{k\in \mathcal{K}{(i)}}v_iZ^{(r)}_{ik}\bigg)\bigg]\\
						=&\sum_{i=1}^I\sum_{k\in \mathcal{K}{(i)}}\lambda_{i}^{(r)}v_i\E\bigg[ \bigg(\sum_{i=1}^I\sum_{k\in \mathcal{K}{(i)}}v_iZ^{(r)}_{ik}\bigg)\mathbbm{1}\left(k=L^{(i)}({Z}^{(r)})\right)\bigg]+\frac{1}{2}\sum_{i=1}^I\sum_{k\in \mathcal{K}{(i)}}\lambda_{i}^{(r)}v^2_i \Prob\left(k=L^{(i)}({Z}^{(r)})\right)\\
						&+\frac{1}{2}\sum_{k=1}^{K}\sum_{i\in \mathcal{I}(k)}\mu_{ik}v^2_i\E\left[P_{ik}({Z}^{(r)})
						\right].\end{aligned}
				\end{equation}
				Let $f({z}) = \left(\sum_{i=1}^I\sum_{k\in \mathcal{K}{(i)}}^{b}v_iz_{ik}\right)^2$, i.e., we consider only the basic activities. Then the generator (\ref{eq:generator}) becomes
				$$\begin{aligned}
					Gf({z})&=2\sum_{i=1}^I\sum_{k\in \mathcal{K}{(i)}}^{b}\lambda_{i}^{(r)}v_i \bigg(\sum_{i=1}^I\sum_{k\in \mathcal{K}{(i)}}^{b}v_iz_{ik}\bigg)\mathbbm{1}\left(k=L^{(i)}({z})\right)+\sum_{i=1}^I\sum_{k\in \mathcal{K}{(i)}}^{b}\lambda_{i}^{(r)}v^2_i \mathbbm{1}\left(k=L^{(i)}({z})\right)\\
					&-2\sum_{k=1}^{K}\sum_{i\in \mathcal{I}(k)}^{b}\mu_{ik}v_iP_{ik}({z})\bigg(\sum_{i=1}^I\sum_{k\in \mathcal{K}{(i)}}^{b}z_{ik}\bigg)+\sum_{k=1}^{K}\sum_{i\in \mathcal{I}(k)}^{b}\mu_{ik}v^2_iP_{ik}({z}). 
				\end{aligned}$$
				By Lemma \ref{lem:steady_exp}, we have
				\begin{equation}\label{equivleq2}
					\begin{aligned}
						&\sum_{k=1}^{K}\sum_{i\in \mathcal{I}(k)}^{b}\mu_{ik}v_i\E\bigg[P_{ik}({Z}^{(r)})\bigg(\sum_{i=1}^I\sum_{k\in \mathcal{K}{(i)}}^{b}v_iZ^{(r)}_{ik}\bigg)\bigg]\\
						=&\sum_{i=1}^I\sum_{k\in \mathcal{K}{(i)}}^{b}\lambda_{i}^{(r)}v_i\E\bigg[ \bigg(\sum_{i=1}^I\sum_{k\in \mathcal{K}{(i)}}^{b}v_iZ^{(r)}_{ik}\bigg)\mathbbm{1}\left(k=L^{(i)}({Z}^{(r)})\right)\bigg]+\frac{1}{2}\sum_{i=1}^I\sum_{k\in \mathcal{K}{(i)}}^{b}\lambda_{i}^{(r)}v^2_i \Prob\left(k=L^{(i)}({Z}^{(r)})\right)\\
						&+\frac{1}{2}\sum_{k=1}^{K}\sum_{i\in \mathcal{I}(k)}^{b}\mu_{ik}v^2_i\E\left[P_{ik}({Z}^{(r)})
						\right].\end{aligned}
				\end{equation}
				Subtracting (\ref{equivleq2}) from (\ref{equivleq1}), we have
				\begin{equation}\label{equivleq3}\begin{aligned}
						&\sum_{k=1}^{K}\sum_{i\in \mathcal{I}(k)}^{nb}\mu_{ik}v_i\E\bigg[P_{ik}({Z}^{(r)})\bigg(\sum_{i=1}^I\sum_{k\in \mathcal{K}{(i)}}v_iZ^{(r)}_{ik}\bigg)\bigg]\\
						&+ \sum_{k=1}^{K}\sum_{i\in \mathcal{I}(k)}^{b}\mu_{ik}v_i\E\bigg[P_{ik}({Z}^{(r)})\bigg(\sum_{i=1}^I\sum_{k\in \mathcal{K}{(i)}}^{nb}v_iZ^{(r)}_{ik}\bigg)\bigg]\\
						=&\sum_{i=1}^I\sum_{k\in \mathcal{K}{(i)}}^{nb}\lambda_{i}^{(r)}v_i\E\bigg[ \bigg(\sum_{i=1}^I\sum_{k\in \mathcal{K}{(i)}}v_iZ^{(r)}_{ik}\bigg)\mathbbm{1}\left(k=L^{(i)}({Z}^{(r)})\right)\bigg]\\
						&+\sum_{i=1}^I\sum_{k\in \mathcal{K}{(i)}}^{b}\lambda_{i}^{(r)}v_i\E\bigg[ \bigg(\sum_{i=1}^I\sum_{k\in \mathcal{K}{(i)}}^{nb}v_iZ^{(r)}_{ik}\bigg)\mathbbm{1}\left(k=L^{(i)}({Z}^{(r)})\right)\bigg]\\
						&+\frac{1}{2}\sum_{i=1}^I\sum_{k\in \mathcal{K}{(i)}}^{nb}\lambda_{i}^{(r)}v^2_i \Prob\left(k=L^{(i)}({Z}^{(r)})\right)+\frac{1}{2}\sum_{k=1}^{K}\sum_{i\in \mathcal{I}(k)}^{nb}\mu_{ik}v^2_i\E\left[P_{ik}({Z}^{(r)})\right]\\  
						= &\sum_{i=1}^I\sum_{k\in \mathcal{K}{(i)}}^{nb}\lambda_{i}^{(r)}v_i\E\bigg[ \bigg(\sum_{i=1}^I\sum_{k\in \mathcal{K}{(i)}}v_iZ^{(r)}_{ik}\bigg)\mathbbm{1}\left(k=L^{(i)}({Z}^{(r)})\right)\bigg]\\
						&+\sum_{i=1}^I\sum_{k\in \mathcal{K}{(i)}}^{b}\lambda_{i}^{(r)}v_i\E\bigg[ \bigg(\sum_{i=1}^I\sum_{k\in \mathcal{K}{(i)}}^{nb}v_iZ^{(r)}_{ik}\bigg)\mathbbm{1}\left(k=L^{(i)}({Z}^{(r)})\right)\bigg]+O(r),\\
				\end{aligned}\end{equation}
				where the last equality is by Lemma \ref{lem:idle}.  
				
				Let $f({z})=(\sum_{i=1}^{I}\sum_{k\in \mathcal{K}(i)}^{nb}v_iz_{ik})^2$, then the generator (\ref{eq:generator}) becomes 
				$$\begin{aligned}
					Gf({z})=&2\sum_{i=1}^{I}\sum_{k\in \mathcal{K}(i)}^{nb}\lambda^{(r)}_iv_i\bigg(\sum_{i=1}^I\sum_{k\in \mathcal{K}(i)}^{nb}v_iz_{ik}\bigg)\mathbbm{1}\left(k=L^{(i)}({z})\right)+\sum_{i=1}^{I}\sum_{k\in \mathcal{K}(i)}^{nb}\lambda^{(r)}_iv^2_i\mathbbm{1}\left(k=L^{(i)}({z})\right)\\
					&-2\sum_{k=1}^{K}\sum_{i\in \mathcal{I}(k)}^{nb}\mu_{ik}v_iP_{ik}({z})\left(\sum_{i=1}^I\sum_{k\in \mathcal{K}(i)}^{nb}v_iz_{ik}\right)+\sum_{k=1}^{K}\sum_{i\in \mathcal{I}(k)}^{nb}\mu_{ik}v^2_iP_{ik}({z}).\\
				\end{aligned}$$
				By Lemma \ref{lem:steady_exp}, and Lemma \ref{lem:idle} with $(\ref{eq:nbcon_1})$, we have
				\begin{equation}\label{equivleq4}
					\begin{aligned}&\sum_{i=1}^{I}\sum_{k\in \mathcal{K}(i)}^{nb}\mu_{ik}v_i\E\bigg[P_{ik}({Z}^{(r)})\bigg(\sum_{i=1}^{I}\sum_{k\in \mathcal{K}(i)}^{nb}v_iZ_{ik}^{(r)}\bigg)\bigg]\\
						&-\sum_{i=1}^{I}\sum_{k\in \mathcal{K}(i)}^{nb}\lambda_iv_i\E\bigg[\bigg(\sum_{i=1}^{I}\sum_{k\in \mathcal{K}(i)}^{nb}v_iZ_{ik}^{(r)}\bigg)\mathbbm{1}\left(k=L^{(i)}({Z}^{(r)})\right)\bigg]\\
						=&\frac{1}{2}\sum_{i=1}^{I}\sum_{k\in \mathcal{K}(i)}^{nb}\lambda^{(r)}_iv^2_i\Prob\left(k=L^{(i)}({Z}^{(r)})\right)+\frac{1}{2}\sum_{k=1}^{K}\sum_{i\in \mathcal{I}(k)}^{nb}\mu_{ik}v^2_i\E\left[P_{ik}({Z}^{(r)})\right]\\
						=&O(r).\end{aligned}\end{equation}
				
				Now, adding (\ref{equivleq3}) to
				(\ref{equivleq4}), we have
				$$\begin{aligned}
					& \sum_{k=1}^{K}\sum_{i\in \mathcal{I}(k)}^{nb}\mu_{ik}v_i\E\bigg[P_{ik}({Z}^{(r)})\bigg(\sum_{i=1}^I\sum_{k\in \mathcal{K}{(i)}}v_iZ^{(r)}_{ik}\bigg)\bigg] + \sum_{k=1}^{K}\sum_{i\in \mathcal{I}(k)}\mu_{ik}v_i\E\bigg[P_{ik}({Z}^{(r)})\bigg(\sum_{i=1}^I\sum_{k\in \mathcal{K}{(i)}}^{nb}v_iZ^{(r)}_{ik}\bigg)\bigg]\\
					=&\sum_{i=1}^I\sum_{k\in \mathcal{K}{(i)}}^{nb}\lambda_{i}^{(r)}v_i\E\bigg[ \bigg(\sum_{i=1}^I\sum_{k\in \mathcal{K}{(i)}}v_iZ^{(r)}_{ik}\bigg)\mathbbm{1}\left(k=L^{(i)}({Z}^{(r)})\right)\bigg]\\
					&+\sum_{i=1}^I\sum_{k\in \mathcal{K}{(i)}}\lambda_{i}^{(r)}v_i\E\bigg[ \bigg(\sum_{i=1}^I\sum_{k\in \mathcal{K}{(i)}}^{nb}v_iZ^{(r)}_{ik}\bigg)\mathbbm{1}\left(k=L^{(i)}({Z}^{(r)})\right)\bigg]+O(r),\\
				\end{aligned}$$
				then by rearranging some terms, 
				$$\begin{aligned}
					&\sum_{i=1}^I\sum_{k\in \mathcal{K}{(i)}}^{nb}\lambda_{i}^{(r)}v_i\E\bigg[ \bigg(\sum_{i=1}^I\sum_{k\in \mathcal{K}{(i)}}v_iZ^{(r)}_{ik}\bigg)\mathbbm{1}\left(k=L^{(i)}({Z}^{(r)})\right)\bigg]\\
					&-\sum_{k=1}^{K}\sum_{i\in \mathcal{I}(k)}^{nb}\mu_{ik}v_i\E\bigg[P_{ik}({Z}^{(r)})\bigg(\sum_{i=1}^I\sum_{k\in \mathcal{K}{(i)}}v_iZ^{(r)}_{ik}\bigg)\bigg] + O(r) \\
					=& r\E\bigg[\bigg(\sum_{i=1}^I\sum_{k\in \mathcal{K}{(i)}}^{nb}v_iZ^{(r)}_{ik}\bigg)\bigg]-\sum_{k=1}^{K}\sum_{i\in \mathcal{I}(k)}d_{ik}\E\bigg[P_{ik}({Z}^{(r)})\bigg(\sum_{i=1}^I\sum_{k\in \mathcal{K}{(i)}}^{nb}v_iZ^{(r)}_{ik}\bigg)\bigg]\\
					&-\sum_{k=1}^{K}u_k\E\bigg[\bigg(\sum_{i=1}^I\sum_{k\in \mathcal{K}{(i)}}^{nb}v_iZ^{(r)}_{ik}\bigg)\mathbbm{1}\left(\sum_{i\in \mathcal{I}(k)}Z^{(r)}_{ik} = 0\right)\bigg].
				\end{aligned}$$
				Note that terms with $d_{ik}$ exist only when $(i,k)$ is non-basic activity, and with (\ref{eq:relation}), we can rewrite the equation above as
				\begin{equation}\label{two_relation}
					\begin{aligned}
						&r\E\bigg[\bigg(\sum_{i=1}^I\sum_{k\in \mathcal{K}{(i)}}^{nb}v_iZ^{(r)}_{ik}\bigg)\bigg]\\
						=&\sum_{i=1}^I\sum_{k\in \mathcal{K}{(i)}}^{nb}\lambda_{i}^{(r)}v_i\E\bigg[ \bigg(\sum_{i=1}^I\sum_{k\in \mathcal{K}{(i)}}v_iZ^{(r)}_{ik}\bigg)\mathbbm{1}\left(k=L^{(i)}({Z}^{(r)})\right)\bigg]\\
						&-\sum_{k=1}^{K}\sum_{i\in \mathcal{I}(k)}^{nb}\mu_{ik}v_i\E\bigg[P_{ik}({Z}^{(r)})\bigg(\sum_{k=1}^{K}u_{k}W_{k}(Z^{(r)})\bigg)\bigg] \\
						&+\sum_{k=1}^{K}\sum_{i\in \mathcal{I}(k)}^{nb}\mu_{ik}v_i\E\bigg[P_{ik}({Z}^{(r)})\bigg(\sum_{i=1}^I\sum_{k\in \mathcal{K}{(i)}}^{nb}\frac{d_{ik}}{\mu_{ik}}Z^{(r)}_{ik}\bigg)\bigg] \\
						&+\sum_{k=1}^{K}\sum_{i\in \mathcal{I}(k)}^{nb}d_{ik}\E\bigg[P_{ik}({Z}^{(r)})\bigg(\sum_{i=1}^I\sum_{k\in \mathcal{K}{(i)}}^{nb}v_iZ^{(r)}_{ik}\bigg)\bigg]\\
						&+\sum_{k=1}^{K}u_k\E\bigg[\bigg(\sum_{i=1}^I\sum_{k\in \mathcal{K}{(i)}}^{nb}v_iZ^{(r)}_{ik}\bigg)\mathbbm{1}\bigg(\sum_{i\in \mathcal{I}(k)}Z^{(r)}_{ik} = 0\bigg)\bigg] + O(r).
					\end{aligned}
				\end{equation}
				Now we discuss each term on the RHS of (\ref{two_relation}). The first term by $(\ref{eq:relation})$  and $(\ref{eq:nbcon_2})$ becomes:
				$$\begin{aligned}
					&\sum_{i=1}^I\sum_{k\in \mathcal{K}{(i)}}^{nb}\lambda_{i}^{(r)}v_i\E\bigg[ \bigg(\sum_{i=1}^I\sum_{k\in \mathcal{K}{(i)}}v_iZ^{(r)}_{ik}\bigg)\mathbbm{1}\left(k=L^{(i)}({Z}^{(r)})\right)\bigg]\\
					\leq&\sum_{i=1}^I\sum_{k\in \mathcal{K}{(i)}}^{nb}\lambda_{i}^{(r)}v_i\E\bigg[ \bigg(\sum_{k'=1}^{K}u_{k'}W_{k'}(Z^{(r)})\bigg)\mathbbm{1}\left(k=L^{(i)}({Z}^{(r)})\right)\bigg]\\
					=&O(r^{1/2}).
				\end{aligned}$$
				For the third term, we take $C_1 >0$ s.t. $\frac{d_{ik}}{\mu_{ik}}\leq C_1, \forall i \in\mathcal{I}, k\in\mathcal{K}$. By (\ref{equivleq4}) and $(\ref{eq:relation})$, 
				$$\begin{aligned}
					&\sum_{k=1}^{K}\sum_{i\in \mathcal{I}(k)}^{nb}\mu_{ik}v_i\E\bigg[P_{ik}({Z}^{(r)})\bigg(\sum_{i=1}^I\sum_{k\in \mathcal{K}{(i)}}^{nb}\frac{d_{ik}}{\mu_{ik}}Z^{(r)}_{ik}\bigg)\bigg]\\
					\leq &C_1\sum_{i=1}^{I}\sum_{k\in \mathcal{K}(i)}^{nb}\mu_{ik}v_i\E\bigg[P_{ik}({Z}^{(r)})\bigg(\sum_{i=1}^{I}\sum_{k\in \mathcal{K}(i)}^{nb}v_iZ_{ik}^{(r)}\bigg)\bigg]\\
					\stackrel{(\ref{equivleq4})}{=} &C_1\sum_{i=1}^{I}\sum_{k\in \mathcal{K}(i)}^{nb}\lambda_iv_i\E\bigg[\left(\sum_{i=1}^{I}\sum_{k\in \mathcal{K}(i)}^{nb}v_iZ_{ik}^{(r)}\right)\mathbbm{1}\left(k=L^{(i)}({Z}^{(r)})\right)\bigg] +O(r)\\
					\stackrel{(\ref{eq:relation})}{\leq} &C_1\sum_{i=1}^I\sum_{k\in \mathcal{K}{(i)}}^{nb}\lambda_{i}v_i\E\bigg[ \bigg(\sum_{k'=1}^{K}u_{k'}W_{k'}(Z^{(r)})\bigg)\mathbbm{1}\left(k=L^{(i)}({Z}^{(r)})\right)\bigg]+O(r)\\
					=&O(r^{1/2}).
				\end{aligned}
				$$
				For the fourth term, we take $C_2 >0$ s.t. $d_{ik}\leq C_2 \mu_{ik} v_i,\forall i\in\mathcal{I}, k\in\mathcal{K}$. 
				$$\begin{aligned}
					&\sum_{k=1}^{K}\sum_{i\in \mathcal{I}(k)}^{nb}d_{ik}\E\bigg[P_{ik}({Z}^{(r)})\bigg(\sum_{i=1}^I\sum_{k\in \mathcal{K}{(i)}}^{nb}v_iZ^{(r)}_{ik}\bigg)\bigg]\\
					\leq &C_2\sum_{i=1}^{I}\sum_{k\in \mathcal{K}(i)}^{nb}\mu_{ik}v_i\E\bigg[P_{ik}({Z}^{(r)})\bigg(\sum_{i=1}^{I}\sum_{k\in \mathcal{K}(i)}^{nb}v_iZ_{ik}^{(r)}\bigg)\bigg]\\
					\stackrel{(\ref{equivleq4})}{=} &C_2\sum_{i=1}^{I}\sum_{k\in \mathcal{K}(i)}^{nb}\lambda_iv_i\E\bigg[\bigg(\sum_{i=1}^{I}\sum_{k\in \mathcal{K}(i)}^{nb}v_iZ_{ik}^{(r)}\bigg)\mathbbm{1}\left(k=L^{(i)}({Z}^{(r)})\right)\bigg] +O(r)\\
					\stackrel{(\ref{eq:relation})}{\leq} &C_2\sum_{i=1}^I\sum_{k\in \mathcal{K}{(i)}}^{nb}\lambda_{i}v_i\E\bigg[ \bigg(\sum_{k'=1}^{K}u_{k'}W_{k'}(Z^{(r)})\bigg)\mathbbm{1}\left(k=L^{(i)}({Z}^{(r)})\right)\bigg]+O(r)\\
					=&O(r^{1/2}).
				\end{aligned}
				$$
				For the fifth term, by Lemma \ref{lem:neg}, we have
				$$\begin{aligned}&\sum_{k=1}^{K}u_k\E\left[\bigg(\sum_{i=1}^I\sum_{k\in \mathcal{K}{(i)}}^{nb}v_iZ^{(r)}_{ik}\bigg)\mathbbm{1}\bigg(\sum_{i\in \mathcal{I}(k)}Z^{(r)}_{ik} = 0\bigg)\right]\\
					\leq& \sum_{k=1}^{K}u_k\E\left[\bigg(\sum_{k'=1}^{K}u_{k'}W_{k'}(Z^{(r)})\bigg)\mathbbm{1}\bigg(\sum_{i\in \mathcal{I}(k)}Z^{(r)}_{ik} = 0\bigg)\right]\\
					=&O(r^{1/2}).
				\end{aligned}$$
				Then by moving the second term to the left, (\ref{two_relation}) becomes
				$$
				r\E\bigg[\bigg(\sum_{i=1}^I\sum_{k\in \mathcal{K}{(i)}}^{nb}v_iZ^{(r)}_{ik}\bigg)\bigg]+\sum_{k=1}^{K}\sum_{i\in \mathcal{I}(k)}^{nb}\mu_{ik}v_i\E\left[P_{ik}({Z}^{(r)})\bigg(\sum_{k'=1}^{K}u_{k'}W_{k'}(Z^{(r)})\bigg)\right]\leq O(r^{1/2}) .\\$$
				from which we prove the $(\ref{eq:nbcon_3})$, that is, for each non-basic activity $(i,k)$, 
				
				$$\E\left[P_{ik}(Z^{(r)})\bigg(\sum_{k'=1}^{K}u_{k'}W_{k'}(Z^{(r)})\bigg)\right] = O(r^{1/2}),
				$$
				and $(\ref{eq:nbcon_4})$:
				$$r\E\left[\bigg(\sum_{i=1}^I\sum_{k\in \mathcal{K}{(i)}}^{nb}v_iZ^{(r)}_{ik}\bigg)\right] = O(r^{1/2}).$$
			\end{enumerate}
		\end{proof}
		
		\subsection{Proof of Lemma $\ref{lem:mbound1}$}\label{sec:mbound1}
		\begin{proof}
			By rearranging $(\ref{equivleq1})$, we have
			$$
			\begin{aligned}
				&\frac{1}{2}\sum_{i=1}^I\sum_{k\in \mathcal{K}{(i)}}\lambda_{i}^{(r)}v^2_i \Prob\left(k=L^{(i)}({Z}^{(r)})\right)+\frac{1}{2}\sum_{k=1}^{K}\sum_{i\in \mathcal{I}(k)}\mu_{ik}v^2_i\E\left[P_{ik}({Z}^{(r)})
				\right]\\
				=&\sum_{k=1}^{K}\sum_{i\in \mathcal{I}(k)}\mu_{ik}v_i\E\bigg[P_{ik}({Z}^{(r)})\bigg(\sum_{i=1}^I\sum_{k\in \mathcal{K}{(i)}}v_iZ^{(r)}_{ik}\bigg)\bigg]\\
				&-\sum_{i=1}^I\sum_{k\in \mathcal{K}{(i)}}\lambda_{i}^{(r)}v_i\E\bigg[ \bigg(\sum_{i=1}^I\sum_{k\in \mathcal{K}{(i)}}v_iZ^{(r)}_{ik}\bigg)\mathbbm{1}\left(k=L^{(i)}({Z}^{(r)})\right)\bigg]\\
				=&r\E\bigg[\bigg(\sum_{i=1}^I\sum_{k\in \mathcal{K}{(i)}}v_iZ^{(r)}_{ik}\bigg)\bigg] -\sum_{k=1}^{K}\sum_{i\in \mathcal{I}(k)}d_{ik}\E\bigg[P_{ik}({Z}^{(r)})\bigg(\sum_{i=1}^I\sum_{k\in \mathcal{K}{(i)}}v_iZ^{(r)}_{ik}\bigg)\bigg]\\
				&-\sum_{k=1}^{K}u_k\E\bigg[\bigg(\sum_{i=1}^I\sum_{k\in \mathcal{K}{(i)}}v_iZ^{(r)}_{ik}\bigg))\mathbbm{1}\bigg(\sum_{i\in \mathcal{I}(k)}Z^{(r)}_{ik} = 0\bigg)\bigg].\\
			\end{aligned}			
			$$
			By rearranging terms above, we further have
			$$\begin{aligned}
				&r\E\bigg[\bigg(\sum_{i=1}^I\sum_{k\in \mathcal{K}{(i)}}v_iZ^{(r)}_{ik}\bigg)\bigg] \\
				=&\sum_{k=1}^{K}\sum_{i\in \mathcal{I}(k)}d_{ik}\E\bigg[P_{ik}({Z}^{(r)})\bigg(\sum_{i=1}^I\sum_{k\in \mathcal{K}{(i)}}v_iZ^{(r)}_{ik}\bigg)\bigg] + \sum_{k=1}^{K}u_k\E\bigg[\bigg(\sum_{i=1}^I\sum_{k\in \mathcal{K}{(i)}}v_iZ^{(r)}_{ik}\bigg))\mathbbm{1}\bigg(\sum_{i\in \mathcal{I}(k)}Z^{(r)}_{ik} = 0\bigg)\bigg]\\
				&+\frac{1}{2}\sum_{i=1}^I\sum_{k\in \mathcal{K}{(i)}}\lambda_{i}^{(r)}v^2_i \Prob\left(k=L^{(i)}({Z}^{(r)})\right)+\frac{1}{2}\sum_{k=1}^{K}\sum_{i\in \mathcal{I}(k)}\mu_{ik}v^2_i\E\left[P_{ik}({Z}^{(r)})
				\right].\\
				\stackrel{(\ref{eq:relation})}{\leq}&\sum_{k=1}^{K}\sum_{i\in \mathcal{I}(k)}d_{ik}\E\bigg[P_{ik}({Z}^{(r)})\bigg(\sum_{k=1}^Ku_kW_k\big({Z}^{(r)}\big)\bigg)\bigg]+ \sum_{k=1}^{K}u_k\E\bigg[\bigg(\sum_{k=1}^Ku_kW_k\big({Z}^{(r)}\big)\bigg)\mathbbm{1}\bigg(\sum_{i\in \mathcal{I}(k)}Z^{(r)}_{ik} = 0\bigg)\bigg]\\
				&+\frac{1}{2}\sum_{i=1}^I\lambda_{i}v^2_i +\frac{1}{2}\sum_{k=1}^{K}\sum_{i\in \mathcal{I}(k)}\mu_{ik}v^2_i.\\ 
				\stackrel{(\ref{eq:nbcon_3})}{=}&O(r^{1/2})+M,
			\end{aligned}$$
			where $M \triangleq  \frac{1}{2}\sum_{i=1}^I\lambda_{i}v^2_i +\frac{1}{2}\sum_{k=1}^{K}\sum_{i\in \mathcal{I}(k)}\mu_{ik}v^2_i$ does not depend on $r$. 
		\end{proof}
		
		\section{Proofs in Section \ref{sec:mainproof}}
		\subsection{Proof of Corollary $\ref{coro:ld_workload}$}\label{sec:ld_workload}
		\begin{proof} 
			Denote $$\tilde{X}^{(r)}\triangleq r \left(\sum_{i=1}^I\sum_{k\in \mathcal{K}(i)}v_iZ^{(r)}_{ik}\right).$$
			Then by Proposition $\ref{prop:ld}$, 
			\begin{equation}\label{eq:neg1}
				\tilde{X}^{(r)}\stackrel{d}{\rightarrow} \tilde{X},\quad as~r \downarrow 0.
			\end{equation}
			Denote $$U^{(r)}\triangleq r\left( \sum_{k=1}^Ku_kW_k({Z}^{(r)})\right),$$
			then by Lemma \ref{lem:nbcon} and $(\ref{eq:relation})$, the difference
			$$\begin{aligned}
				E\left[U^{(r)}-\tilde{X}^{(r)}\right] = r\E\left( \sum_{i=1}^I\sum_{k\in \mathcal{K}(i)}\frac{d_{ik}}{\mu_{ik}}Z^{(r)}_{ik}\right) \rightarrow 0,\quad  as~ r \downarrow 0,
			\end{aligned}$$
			i.e. 
			$$\left|U^{(r)}-\tilde{X}^{(r)}\right|\stackrel{L^1}{\rightarrow} 0,\quad  as~ r \downarrow 0,$$
			which means
			\begin{equation}{\label{eq:neg2}}\begin{aligned}
					&\left|U^{(r)}-\tilde{X}^{(r)}\right|\stackrel{p}{\rightarrow} 0, \quad as~ r \downarrow 0.
				\end{aligned}
			\end{equation}
			Combining (\ref{eq:neg1}) with (\ref{eq:neg2}), we have
			$$U^{(r)}\stackrel{d}{\rightarrow }\tilde{X}, \quad as~ r \downarrow 0.$$
		\end{proof}

		\subsection{Proof of Lemma $\ref{lem:interchg}$}\label{sec:interchg}
			
			\begin{proof}
				For each $i\in\mathcal{I}$, we have
				$$
				\begin{aligned}
					& \lim_{r\downarrow 0}\bigg(\lambda_{i}v_{i}\phi^{(r)}(\theta)
					-\sum_{k\in \mathcal{K}(i)}u_k\phi^{(r)}_{ik}(\theta)\bigg)\\
					=& \lim_{r\downarrow 0}\E\bigg[\lim_{t\downarrow 0}\bigg(\lambda_{i}v_i-\sum_{k\in \mathcal{K}(i)}u_kP_{ik}({Z}^{(r)})\bigg)e^{r\theta\left(t\sum_{k\in \mathcal{K}(i)}v_{i}Z^{(r)}_{ik}\right)}f_{\theta}(Z^{(r)})\bigg]\\
					\stackrel{(i)}{=}& \lim_{r\downarrow 0}\lim_{t\downarrow 0}\E\bigg[\bigg(\lambda_{i}v_i-\sum_{k\in \mathcal{K}(i)}u_kP_{ik}({Z}^{(r)})\bigg)e^{r\theta\left(t\sum_{k\in \mathcal{K}(i)}v_{i}Z^{(r)}_{ik}\right)}f_{\theta}(Z^{(r)})\bigg]\\
					\stackrel{(ii)}{=}& \lim_{t\downarrow 0}\lim_{r\downarrow 0}\E\bigg[\bigg(\lambda_{i}v_i-\sum_{k\in \mathcal{K}(i)}u_kP_{ik}({Z}^{(r)})\bigg)e^{r\theta\left(t\sum_{k\in \mathcal{K}(i)}v_{i}Z^{(r)}_{ik}\right)}f_{\theta}(Z^{(r)})\bigg]\\
					\stackrel{(iii)}{=}&\lim_{t\downarrow 0}0=0,
				\end{aligned}
				$$
				where $(i)$ is by bounded convergence theorem applies because $\theta\le 0$ and $0\le f_\theta\le 1$; $(iii)$ is directly by $(\ref{eq:helper})$ in case (a). For the proof of $(ii)$, we first introduce Moore-Osgood Theorem in \cite[p. 100, Theorem 2]{Lawr1946}:
				\begin{theorem}[Moore-Osgood]{\label{thm:moore}}
					If $ \lim _{x \rightarrow p} f(x, y)$ exists point-wise for each $y$ different from  $q$  and if  $\lim _{y \rightarrow q} f(x, y)$ converges uniformly for $x \neq p$  then the double limit and the iterated limits exist and are equal, i.e.
					$$\lim _{(x, y) \rightarrow(p, q)} f(x, y)=\lim _{x \rightarrow p} \lim _{y \rightarrow q} f(x, y)=\lim _{y \rightarrow q} \lim _{x \rightarrow p} f(x, y).$$
				\end{theorem}
				Now we verify the interchange of limits w.r.t $r$ and $t$ for the function $g_i(r,t)$, for each $i\in\mathcal{I}$:
				$$
				\begin{aligned}g_i(r,t)\triangleq
					&\E\bigg[\bigg(\lambda_{i}v_i-\sum_{k\in \mathcal{K}(i)}u_kP_{ik}({Z}^{(r)})\bigg)e^{r\theta\left(t\sum_{k\in \mathcal{K}(i)}v_{i}Z^{(r)}_{ik}\right)}f_{\theta}(Z^{(r)})\bigg].\end{aligned}$$
				First, by $(\ref{eq:helper})$,
				$\lim_{r\rightarrow 0}g_i(r ,t)$ exists point-wise for $t\neq 0$. Next we present the following Lemma \ref{lem:moorecon1} to check the second condition of Moore-Osgood Theorem, and the proof is put in the Appendix \ref{sec:moorecon1}. 
				\begin{lemma}{\label{lem:moorecon1}}
					$\lim_{t\rightarrow 0}g_i(r ,t)$ converges uniformly for $r\neq 0$, for $i\in\mathcal{I}$.
				\end{lemma}
				
				Then the conditions for Moore-Osgood Theorem $\ref{thm:moore}$ are satisfied, therefore the limits can be interchanged, completing the proof of $(ii)$. 
				
			\end{proof}
			
			\subsection{Proof of lemma $\ref{lem:moorecon1}$}\label{sec:moorecon1}
			
			\begin{proof}
				We show that for every $\xi >0$, there exists $\delta_i>0$ such that, whenever $|t-0|<\delta_i$ and $ r \neq 0$ is sufficiently small, $|g_i(r ,t)-g_i(r ,0)|<\xi$. For $i\in\mathcal{I}$,
				$$\begin{aligned}
					& |g_i(r ,t)-g_i(r ,0)|\\
					=&\left|\E\left[\bigg(\lambda_{i}v_i -\sum_{k\in \mathcal{K}(i)}u_kP_{ik}({Z}^{(r)})\bigg)e^{r\theta\left(\sum_{i=1}^I\sum_{k\in \mathcal{K}(i)}v_iZ^{(r)}_{ik}\right)}\left(e^{r\theta t\sum_{k\in \mathcal{K}(i)}v_iZ^{(r)}_{ik}}-1\right)\right]\right|\\
					\leq & \E\left[\left|\bigg(\lambda_{i}v_i -\sum_{k\in \mathcal{K}(i)}u_kP_{ik}({Z}^{(r)})\bigg)\right|\left|e^{r\theta\left(\sum_{i=1}^I\sum_{k\in \mathcal{K}(i)}v_iZ^{(r)}_{ik}\right)}\right|\left|\left(e^{r\theta t\sum_{k\in \mathcal{K}(i)}v_{i}Z^{(r)}_{ik}}-1\right)\right|\right|\\
					\leq & 2\E\left|\left(e^{r\theta t\sum_{k\in \mathcal{K}(i)}v_{i}Z^{(r)}_{ik}}-1\right)\right|\\
					\stackrel{(a)}{\leq}  & 2|\theta|tr \sum_{k\in \mathcal{K}(i)}v_i \E\left( Z^{(r)}_{ik}\right)\\
					\stackrel{(b)}{\leq} & 2|\theta|v_itM_0,\\
				\end{aligned}
				$$
				where $(a)$ uses the inequality
				$1-e^{-x}\leq x$ for $  x\geq 0$; $(b)$ follows from Lemma $\ref{lem:mbound1}$, which ensures there exists $M_0>0$ with $r \sum_{k\in \mathcal{K}(i)}\E\left(Z^{(r)}_{ik}\right)\leq M_0$. Hence, we let $\delta_i = \frac{\xi}{3v_{i}M_0 |\theta|}$. When $|t|<\delta_i$,
				$$|g_i(r ,t)-g_i(r ,0)|\leq 2v_iM_0 |\theta| \frac{\xi}{3v_iM_0|\theta|}< \xi.$$
				Therefore, $\lim_{t\downarrow 0}g_i(r ,t)$ converges uniformly for $r\in(0,1)$, $i\in\mathcal{I}$.
			\end{proof}

			\addtocontents{toc}{\endgroup}
		\end{appendix}

	\end{document}